\newtheorem{theorem}{Theorem}[section]
\newtheorem{lemma}[theorem]{Lemma}
\newtheorem{proposition}[theorem]{Proposition}
\newtheorem{corollary}[theorem]{Corollary}
\newtheorem{conjecture}[theorem]{Conjecture}
\theoremstyle{definition}
\newtheorem{definition}[theorem]{Definition}
\newtheorem{example}[theorem]{Example}
\newcommand\C{\mathbb C}
\newcommand\R{\mathbb R}
\newcommand\Z{\mathbb Z}
\newcommand{\conv}{\operatorname{conv}}
\newcommand{\graph}{\operatorname{graph}}
\newcommand{\skel}{\operatorname{skel}}
\title[Systems of permutations not realizable by triangulations of $\Delta_{n-1}\times \Delta_{d-1}$]
{Some acyclic systems of permutations are not realizable by triangulations of a product of simplices}
\author{Francisco Santos}
\address[Francisco Santos]{Facultad de Ciencias, Universidad de Cantabria, 
Av. de los Castros s/n, E-39005 Santander, Spain.}  
\email{francisco.santos@unican.es}
\thanks{Partially supported by the Spanish Ministry of Science through grants MTM2011-22792 and CSD2006-00032 (i-MATH). This paper answers half of the question posed by F.~Ardila in the open problems session of the \emph{Workshop on Tropical Geometry} held at the CIEM (Castro Urdiales, Spain) in December 2011. I thank the organizers for assembling such an interesting group of people and talks}
\begin{document}

\begin{abstract}
The \emph{acyclic system conjecture} of Ardila and Ceballos can be interpreted as saying the following:
``Every triangulation of the $3$-skeleton of a product $\Delta_k\times \Delta_l$
of two simplices can be extended to a triangulation of the whole product".
We show an example disproving this.

Motivation for this conjecture comes from a related conjecture, the ``spread-out simplices'' conjecture of Ardila and Billey. We give some necessary conditions that counter-examples to this second conjecture (if they exist) must satisfy.
\end{abstract}

\maketitle


\section{Introduction}
\label{sec:intro}

Triangulations (with no extra vertices) of  the product of two simplices have extensive applications and implications in geometric and algebraic combinatorics, optimization, tropical geometry, and in other areas. See, for example, the references in~\cite{ArCe11, ArBi07, Santos:cayley}, and~\cite[Section 6.2]{dLRaSa10}. 

Since triangulations of $\Delta_{n-1}\times \Delta_1$ are in bijection with permutations of $[n]$, every triangulation $T$ of $\Delta_{n-1}\times \Delta_{d-1}$ induces a  \emph{system of permutations} on $K_d=\graph(\Delta_{d-1})$, as follows: Along each edge $e$ of $\graph(\Delta_{d-1})$ we write the permutation of $[n]$ that corresponds to the restriction of $T$ to $\Delta_{n-1}\times e$. We say ``write along'' because the edge $e$ is considered oriented, and reversing the orientation of $e$ amounts to reversing the permutation ``written on it''. (A permutation in this paper is merely an ordering of the symbols $1$ to $n$; we are not concerned with the group structure or other algebraic properties of permutations. Reversing means reordering the symbols in the opposite way). See details in Section~\ref{sec:permutations}. 

Ardila and Ceballos~\cite{ArCe11} try to answer the following question: if we are given a system of permutations of $[n]$ on the complete graph $K_d=\graph(\Delta_{d-1})$, 
what are the conditions for it to actually come from a triangulation of $\Delta_{n-1}\times \Delta_{d-1}$?
A necessary condition that they identify is that the system of permutations needs to be \emph{acyclic}: for every $i,j\in [n]$, if we reorient
 $\graph(\Delta_{d-1})$ so that \emph{$i$ comes before $j$} along every edge, the directed graph is acyclic. The work of Ardila and Ceballos implies the following: 

\begin{lemma}[Ardila and Ceballos~\cite{ArCe11}]
\label{lemma:ArCe11}
Let $\Sigma$ be a system of permutations of $[n]$ on $\graph(\Delta_{d-1})$ or, equivalently, a triangulation of the polyhedral complex $\Delta_{n-1}\times\graph(\Delta_{d-1})$. The following properties are equivalent:
\begin{enumerate}
\item The system of permutations is acyclic.
\item It has a dual system of permutations of $[d]$ on $\graph(\Delta_{n-1})$. That is, the triangulation of $\Delta_{n-1}\times\graph(\Delta_{d-1})$ is compatible with a triangulation of $\graph(\Delta_{n-1})\times \Delta_{d-1}$ (and then this dual is unique).
\item The triangulation extends to a triangulation of $\Delta_{n-1}\times\skel_2(\Delta_{d-1})$.
\item The triangulation is compatible with a triangulation of $\skel_3(\Delta_{n-1}\times\Delta_{d-1})$
\end{enumerate}
\end{lemma}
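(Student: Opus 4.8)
The plan is to decouple the four conditions into purely \emph{local} statements about the low-dimensional faces of $\Delta_{n-1}\times\Delta_{d-1}$ and then establish those local equivalences. The organising principle is the standard fact that, for a polyhedral complex $\mathcal C$, a triangulation of $\Delta_{n-1}\times\mathcal C$ is the same as a family of triangulations of the cells $\Delta_{n-1}\times\sigma$, $\sigma\in\mathcal C$, agreeing on the overlaps $\Delta_{n-1}\times(\sigma\cap\sigma')$. Since $\Delta_{n-1}\times\{v\}$ is a simplex with a unique triangulation, a triangulation of $\Delta_{n-1}\times\graph(\Delta_{d-1})$ is just an unconstrained choice of one permutation $\pi_e$ of $[n]$ per edge $e$ — the identification used in the statement — and a triangulation of $\Delta_{n-1}\times\skel_2(\Delta_{d-1})$ extending a given $\Sigma$ amounts, \emph{independently over each $2$-face} $\{a,b,c\}$ of $\Delta_{d-1}$, to a triangulation of $\Delta_{n-1}\times\conv\{a,b,c\}\cong\Delta_{n-1}\times\Delta_2$ with edge-permutations $\pi_{ab},\pi_{bc},\pi_{ca}$; gluing along a shared edge $e$ is automatic since both sides restrict there to $\pi_e$.

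With this in hand, the engine of the proof is a single local claim, which I would isolate as a lemma: three permutations of $[n]$ on the edges of a triangle $\{a,b,c\}$ are the edge-permutations of a triangulation of $\Delta_{n-1}\times\Delta_2$ if and only if, for every pair $i,j\in[n]$, the orientation they induce on the triangle has no directed $3$-cycle. Granting this, $(1)\Leftrightarrow(3)$ is immediate from the elementary tournament fact that, for fixed $i,j$, the ``$i$ precedes $j$'' orientation of $\graph(\Delta_{d-1})=K_d$ is acyclic exactly when it has no directed triangle: acyclicity of $\Sigma$ then means ``every triple of $\Delta_{d-1}$ extends'', which by the reduction above is $(3)$. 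For the claim itself, necessity is the Sch\"onhardt obstruction — restricting a triangulation of $\Delta_{n-1}\times\Delta_2$ to $\conv\{i,j\}\times\conv\{a,b,c\}$ gives a triangulation of the triangular prism $\Delta_1\times\Delta_2$, and an elementary argument shows that the three diagonals such a triangulation induces on the square facets are never cyclically aligned. Sufficiency — manufacturing an honest triangulation of $\Delta_{n-1}\times\Delta_2$ from an acyclic triple — is the step I expect to be the main obstacle. I would handle it either by quoting Ardila and Ceballos~\cite{ArCe11} (equivalently, invoking that every tropical oriented matroid in this small dimension is realizable), or by a regular-triangulation construction: seek heights $h_{ia},h_{ib},h_{ic}$ so that the $n$-tuples $(h_{ia}-h_{ib})_i$, $(h_{ib}-h_{ic})_i$, $(h_{ic}-h_{ia})_i$ are sorted according to $\pi_{ab},\pi_{bc},\pi_{ca}$ respectively; the induced regular subdivision is then automatically a triangulation with the required boundary, and the feasibility of this linear system is precisely equivalent to the no-$3$-cycle condition — that equivalence (a Farkas-type certificate) being the only genuine work in this route.

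The other two equivalences are formal. For $(1)\Leftrightarrow(2)$: acyclicity says that for every pair $i,j\in[n]$ the ``$i$ precedes $j$'' orientation of $K_d$ is a linear order $\sigma_{ij}$ of $[d]$, and assigning $\sigma_{ij}$ to the edge $\{i,j\}$ of $\graph(\Delta_{n-1})$ produces the manifestly unique candidate dual $\Sigma^*$; it is automatically acyclic (for colours $k,l$, the ``$k$ precedes $l$'' orientation of $K_n$ is just the linear order $\pi_{kl}$), so the picture is symmetric, and $\Sigma,\Sigma^*$ induce the same triangulation of $\skel_1(\Delta_{n-1})\times\skel_1(\Delta_{d-1})$ because on each square both encode the single bit ``$i$ precedes $j$ in $\pi_{kl}$'' $=$ ``$k$ precedes $l$ in $\sigma_{ij}$''; conversely, the existence of any dual forces these linear orders to exist, giving $(1)$. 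Finally $(3)\Leftrightarrow(4)$ is skeleton-dimension bookkeeping: every face of $\Delta_{n-1}\times\Delta_{d-1}$ of dimension $\le 3$ is a product $F\times G$ with $\dim F+\dim G\le 3$, so $\skel_3(\Delta_{n-1}\times\Delta_{d-1})$ is covered by $\Delta_{n-1}\times\skel_2(\Delta_{d-1})$ together with $\skel_2(\Delta_{n-1})\times\Delta_{d-1}$; the triangulation extending $\Sigma$ over the first (this is $(3)$) and, since $(3)$ gives $(2)$ with $\Sigma^*$ acyclic, a dual triangulation extending $\Sigma^*$ over the second, agree on the part of $\skel_3$ lying in both — there the overlapping faces are the $3$-dimensional prisms triangle $\times$ edge and edge $\times$ triangle, each determined by the diagonals on its $2$-faces, which are fixed by the common data $\Sigma,\Sigma^*$ — and so paste to a triangulation of $\skel_3$ compatible with $\Sigma$, i.e.\ $(4)$; conversely $(4)\Rightarrow(1)$ by applying the Sch\"onhardt obstruction to the $3$-dimensional prisms $\conv\{i,j\}\times\conv\{a,b,c\}\subseteq\skel_3(\Delta_{n-1}\times\Delta_{d-1})$. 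Chaining: $(1)\Leftrightarrow(2)\Leftrightarrow(3)\Leftrightarrow(4)$.
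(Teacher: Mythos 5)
A preliminary remark on the comparison you were asked for: the paper contains no proof of this lemma --- it states it as a consequence of the work of Ardila and Ceballos and cites~\cite{ArCe11} --- so your proposal can only be judged on its own merits. Its formal skeleton is in good shape: the reduction of (3) to one $2$-face at a time (gluing over a shared edge is automatic because a triangulation of $\Delta_{n-1}\times\Delta_1$ is determined by its permutation), the construction of the dual system and its uniqueness for $(1)\Leftrightarrow(2)$, the prism/Sch\"onhardt obstruction for $(3)\Rightarrow(1)$ and $(4)\Rightarrow(1)$, and the bookkeeping for $(3)\Leftrightarrow(4)$ are all correct. (For $(3)\Rightarrow(4)$ you do not even need the dual: the only faces of $\Delta_{n-1}\times\Delta_{d-1}$ of dimension at most $3$ that do not lie in $\Delta_{n-1}\times\skel_2(\Delta_{d-1})$ are simplices $\{v\}\times G$ with $G$ a $3$-face, so there is nothing to glue.)

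The genuine gap is the one implication that carries all the content, $(1)\Rightarrow(3)$: that a triple of permutations on a triangle with no directed $3$-cycle for any pair $i,j$ extends to a triangulation of $\Delta_{n-1}\times\Delta_2$. Your first option, ``quote Ardila and Ceballos'', is not a proof but a restatement of the lemma's attribution: this implication \emph{is} their $\min\{n-1,d-1\}=2$ theorem. Your second option, the heights construction, hinges on the assertion that the system ``find $x,y,z\in\R^n$ with $x+y+z=0$ realizing the three orders'' is feasible \emph{exactly} when the no-$3$-cycle condition holds. Only one direction is routine (if $x+y+z=0$, a pair ordered the same way on all three edges telescopes to $0<0$). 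The converse --- the direction you actually need --- is an unproved and genuinely nontrivial claim: it asserts that every acyclic triple is realized by a \emph{regular} triangulation of $\Delta_{n-1}\times\Delta_2$, which is strictly stronger than what \cite{ArCe11} prove (their construction yields some fine mixed subdivision, with no coherence guarantee). Nor is the ``Farkas-type certificate'' you defer to a routine verification: the closed-cone identity that the natural duality argument would rest on, namely that $\{s:\ s\ \sigma\text{-nondecreasing}\}+\{s:\ s\ \tau\text{-nondecreasing}\}$ equals the order cone of the partial order $\sigma\cap\tau$, is false. For example, take $n=6$, $\sigma=123456$ and $\tau=543216$ (both orders have $6$ on top, and are reversed on $\{1,\dots,5\}$); the vector $1_{\{1,3,6\}}$ lies in the order cone of $\sigma\cap\tau$, but writing it as $u+v$ with $u$ $\sigma$-nondecreasing and $v$ $\tau$-nondecreasing forces both $u_1=\cdots=u_6$ (from $v_1\le v_6$) and $u_3\ge u_2+1$ (from $v_3\le v_2$), a contradiction. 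So any correct certificate argument must work with the strict inequalities and the particular linear extension in a finer way, and you have not supplied it; as it stands, the central implication of the lemma is established only by citing the very source to which the lemma is attributed.
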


Ardila and Ceballos conjectured that in fact every acyclic system of permutations extends to a triangulation of $\Delta_{n-1}\times\Delta_{d-1}$. The main result of this paper is that this is false starting (at least) in $\Delta_4\times \Delta_3$ (Section~\ref{sec:acyclic}). Before that, and as a partial result, we show that there is a triangulation of the $4$-skeleton of $\Delta_2\times \Delta_3$ that does not extend to a triangulation of 
$\Delta_2\times \Delta_3$ (Section~\ref{sec:boundary}).

Incidentally, our examples, and the equivalence of parts (3) and (4) in Lemma~\ref{lemma:ArCe11}, imply the following. 
\begin{quote}
For $i\in\{1,3\}$, every triangulation of $\skel_i(\Delta_{n-1}\times\Delta_{d-1})$ extends to a triangulation of
$\skel_{i+1}(\Delta_{n-1}\times\Delta_{d-1})$. For $i\in \{2,4,6\}$, the same is not true. (For $i=2$ consider the example of a triangular prism with its three squares triangulated in a cyclic way. For $i=4$ and $i=6$ consider our examples from Sections~\ref{sec:boundary} and ~\ref{sec:acyclic}).
\end{quote} 
We wonder whether the different behavior depending on the parity of $i$ is just a coincidence, or it continues for bigger values of $i$. In particular, it would be interesting to solve the case $i=5$.

\medskip

But the main motivation for Ardila and Ceballos to study acyclic systems of permutations was to use them as an intermediate tool to try to prove the \emph{spread-out simplices} conjecture of Ardila and Billey~\cite{ArBi07}, connected with the matroid of lines in an arrangement of complete flags in $\C^n$ (see Section~\ref{sec:spread}). In Section~\ref{sec:simplices} we show that our counter-example is \emph{not} a counter-example to the spread-out simplices conjecture. While doing this, we identify certain sufficient conditions for a spread-out system of positions to be \emph{realizable} by some  fine mixed subdivision.

Section~\ref{sec:prelim} contains known facts on triangulations of $\Delta_{n-1}\times \Delta_{d-1}$ and fine mixed subdivisions of $n\Delta_{d-1}$. In particular, it reviews some of the results from~\cite{ArCe11, ArBi07}.

\section{Preliminaries}
\label{sec:prelim}
\subsection{Triangulations of $\Delta_{n-1}\times\Delta_{d-1}$ and fine mixed subdivisions of $n\Delta_{d-1}$}
\label{sec:calyley}
%
%
Let $T$ be a triangulation of  $\Delta_{n-1}\times\Delta_{d-1}$. To each cell $B\in T$ we associate the $n$-tuple $(B_1,\dots,B_n)$ of faces of $\Delta_{d-1}$ that it uses on the different vertex-fibers of the projection $\Delta_{n-1}\times\Delta_{d-1}\to \Delta_{n-1}$. Put differently, 
if $v_1,\dots,v_n$ denote the vertices of $\Delta_{n-1}$, $B_i$ is the face of $\Delta_{d-1}$ for which
\[
\{v_i\}\times B_i = B\cap (\{v_i\}\times \Delta_{d-1}). 
\]

The cells $\{B_1+\cdots+B_n: B \in T\}$ form a \emph{mixed subdivision} $\mathcal T$ of $n\Delta_{d-1}$: a polyhedral decomposition of $n\Delta_{d-1}$ into cells each of which is a \emph{Minkowski sum} of $n$ faces of $\Delta_{d-1}$. The mixed subdivision corresponding to a triangulation is \emph{fine}, meaning that in each Minkowski cell $\sum B_i$ we have that $\dim(\sum B_i) = \sum (\dim B_i)$. The following statement is a special case of the \emph{Cayley Trick}~\cite{HuRaSa:cayley}:

\begin{theorem}[\cite{HuRaSa:cayley, Santos:cayley}]
The above correspondence produces a bijection between triangulations of $\Delta_{n-1}\times\Delta_{d-1}$ and fine mixed subdivisions of $n\Delta_{d-1}$.
\end{theorem}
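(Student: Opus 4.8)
This is the \emph{Cayley Trick} in the special case where the $n$ polytopes being Minkowski-summed are all equal to $\Delta_{d-1}$, so the plan is to follow the argument of~\cite{HuRaSa:cayley, Santos:cayley}, specialised to this situation, and the first step is to fix the geometric picture. I would identify $\Delta_{n-1}\times\Delta_{d-1}$ with the \emph{Cayley polytope} $\mathcal C=\conv\bigcup_{i=1}^n\bigl(\{v_i\}\times\Delta_{d-1}\bigr)$, where $v_1,\dots,v_n$ are the vertices of $\Delta_{n-1}$, and let $\pi\colon\mathcal C\to\Delta_{n-1}$ be the projection to the first factor. An elementary computation shows that for $p\in\Delta_{n-1}$ with barycentric coordinates $(p_1,\dots,p_n)$ the fiber $\pi^{-1}(p)$ equals $\{p\}\times\sum_i p_i\Delta_{d-1}$, a weighted Minkowski sum; in particular the fiber $F$ over the barycenter $b=\frac1n\sum_i v_i$ is, after rescaling by $n$, a copy of $n\Delta_{d-1}$. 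All the combinatorics will be read off from how a triangulation, resp.\ a fine mixed subdivision, sits over $F$.

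For the forward map $T\mapsto\mathcal T$ I would \emph{slice}: replace each cell $B\in T$ by $B\cap F$. Two facts make this work. First, for every cell $B\in T$ the image $\pi(B)$ is a face $\Delta_S=\conv\{v_i:i\in S\}$ of $\Delta_{n-1}$, because the vertices of $B$ are vertices of $\mathcal C$ and hence map to vertices of $\Delta_{n-1}$. Second, a dimension count forces every full-dimensional $B$ to have $\pi(B)=\Delta_{n-1}$: if $B$ is a simplex of dimension $n+d-2$ with $\pi(B)=\Delta_S$, then its fibers have dimension at least $(n+d-2)-(|S|-1)$, which cannot exceed $d-1$, so $|S|=n$. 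Hence $b\in\operatorname{relint}\pi(B)$, so $B\cap F$ is full-dimensional in $F$ with relative interior $\operatorname{relint}(B)\cap F$; since the relative interiors of the cells of $T$ are disjoint and cover $\mathcal C$, the cells $\{B\cap F:B\in T\}$ form a polyhedral subdivision of $F\cong n\Delta_{d-1}$. Under that identification $B\cap F$ becomes exactly the Minkowski sum $B_1+\cdots+B_n$ with $B_i$ as defined in the text, and the fact that $B$ is a simplex gives $\sum_i(\dim B_i+1)=n+d-1$, i.e.\ $\sum_i\dim B_i=d-1=\dim(B_1+\cdots+B_n)$, so $\mathcal T$ is \emph{fine}.

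For the inverse map $\mathcal T\mapsto T$ I would lift each fine Minkowski cell $B_1+\cdots+B_n$ to the Cayley cell $\conv\bigcup_i\bigl(\{v_i\}\times\operatorname{vert}(B_i)\bigr)\subseteq\mathcal C$. Fineness guarantees this cell has exactly $\sum_i(\dim B_i+1)=n+d-1$ vertices, and one checks they are affinely independent, so it is a simplex of dimension $n+d-2=\dim\mathcal C$. It then remains to verify that these simplices form a triangulation of $\mathcal C$: that they cover $\mathcal C$ and that any two of them meet in a common face. Both reduce, via the slicing of the previous paragraph applied over interior points $p\in\Delta_{n-1}$, to the corresponding properties of $\mathcal T$, using that a fine mixed subdivision induces a compatible polyhedral subdivision of the weighted Minkowski sum $\sum_i p_i\Delta_{d-1}$ for every $p$. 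Finally, the two maps are mutually inverse essentially by construction: slicing the Cayley cell of $B_1+\cdots+B_n$ by $F$ returns $\frac1n(B_1+\cdots+B_n)$, and starting from a triangulation and slicing recovers exactly the cells one lifts back.

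The dimension and vertex counts are routine; \textbf{the main obstacle is the ``fitting together''}: proving that the slices of a triangulation form a genuine subdivision of $F$ (pairwise intersections are faces, not mere overlaps) and, dually, that the Cayley lifts of a fine mixed subdivision genuinely triangulate $\mathcal C$ rather than just cover it. The cleanest way I see to organise this is to prove first the more general statement that polyhedral subdivisions of $\mathcal C$ correspond to mixed subdivisions of $n\Delta_{d-1}$ (the Cayley Trick proper), and then restrict to the fine/simplicial classes; but even at that level the delicate case is that of non-regular subdivisions, where no lifting-function argument is available and one must analyse the incidences locally, cell by cell, as in~\cite{HuRaSa:cayley}. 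For regular triangulations and regular (coherent) mixed subdivisions the correspondence is immediate from compatible choices of height functions, so one could alternatively record that case first and then bootstrap.
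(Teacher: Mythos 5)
The paper gives no proof of this statement at all; it simply cites the Cayley Trick references \cite{HuRaSa:cayley, Santos:cayley}, and your slicing-at-the-barycenter argument (with the lift of fine Minkowski cells as inverse) is precisely the standard argument of those references. Your outline is correct and follows essentially that same approach, including your honest identification that the real work lies in the face-to-face ``fitting together'' for non-regular subdivisions, which is exactly what the cited papers handle.
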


In a mixed subdivision, cells come with a natural \emph{ordered Minkowski sum} structure; that is, strictly speaking the cells 
of a mixed subdivision are the  $n$-tuples $(B_1,\dots,B_n)$ rather than their Minkowski sums, even if we normally write them as Minkowski sums $\sum B_i$ to simplify notation. Cells in a mixed subdivision intersect face to face in the following labeled sense: if $\sum B_i$ and $\sum C_i$ are two such cells then we have that $B_i\cap C_i$ is a face of both $B_i$ and $C_i$ for every $i=1,\dots, n$, and $\sum (B_i\cap C_i)$ is also a cell in the mixed subdivision.

In every fine mixed subdivision of $n\Delta_{d-1}$ there are some special cells which appear as $(d-1)$-simplices because they are
the Minkowski sum of $\Delta_{d-1}$ with $n-1$ vertices. We call them the \emph{unmixed simplices} of the subdivision.
There are $n$ of them, one with the simplex summand in each possible position. It turns out~\cite[Theorem 2.6]{Santos:cayley} that labeling these cells with the symbols $1$ to $n$ is enough to recover from an ``unlabeled'' fine mixed subdivision (a mere decomposition of $n\Delta_{d-1}$ into subpolytopes with individual Minkowski decompositions) the whole labeled one (the assignment of an $n$-tuple $(B_1,\dots,B_n)$ of faces of $\Delta_{d-1}$ to each cell so that they intersect face to face in the labeled sense).

\begin{example}[A mixed subdivision of $4\Delta_2$]
\label{ex:4D2}
A fine mixed subdivision of $n\Delta_d$ is a \emph{lozenge tiling}: 
a decomposition of $n\Delta_2$ into cells which are either translated copies of $\Delta_2$ or rhombi (also known as \emph{lozenges}) which are the union of a translated copy of $\Delta_2$ and a translated copy of $-\Delta_2$. Each such tiling has $n$ cells which are triangles, because the triangular tiling of $n\Delta_2$ has $n$ more copies of $\Delta_2$ than of $-\Delta_2$. Each of them is the center of a \emph{zone}, built by starting with the triangle itself and
recursively adding to it lozenges in the three directions, until we reach the three sides of $n\Delta_2$.
The left part of Figure~\ref{fig:example} shows a mixed subdivision of $4\Delta_2$ in the ``unlabeled sense''; on the right we have labeled the triangles with the numbers from $1$ to $4$ and the zones induced are shown in Figure~\ref{fig:zones}.

\begin{figure}
	\centering
	\includegraphics[scale=0.7]
	{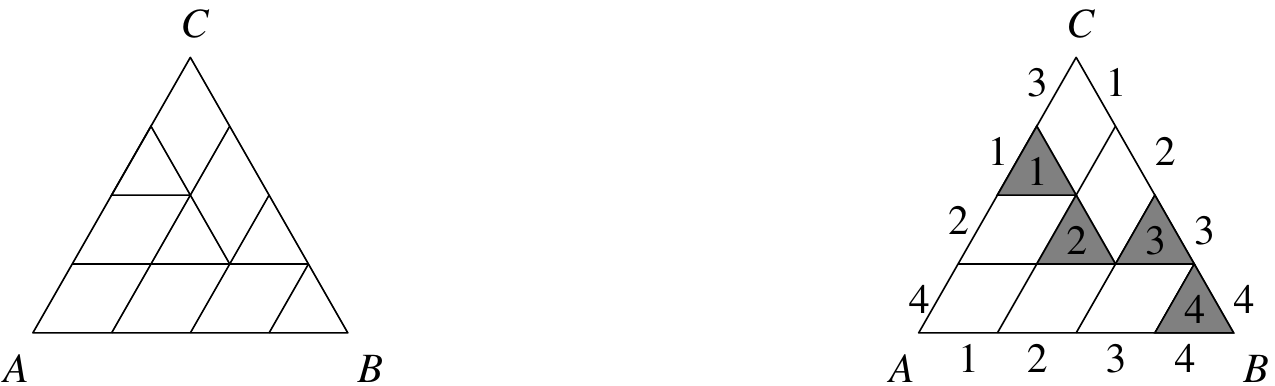}
	\bigskip\\	
	\caption{An ``unlabeled'' fine mixed subdivision of $4\Delta_2$ (left). Labeling the four triangular cells with the symbols $1$ to $4$ specifies uniquely a (labeled) fine mixed subdivision. In particular, it gives the corresponding system of permutations (right)}
	\label{fig:example}
\end{figure}

\begin{figure}[ht]
	\centering
	\includegraphics[scale=0.7]
	{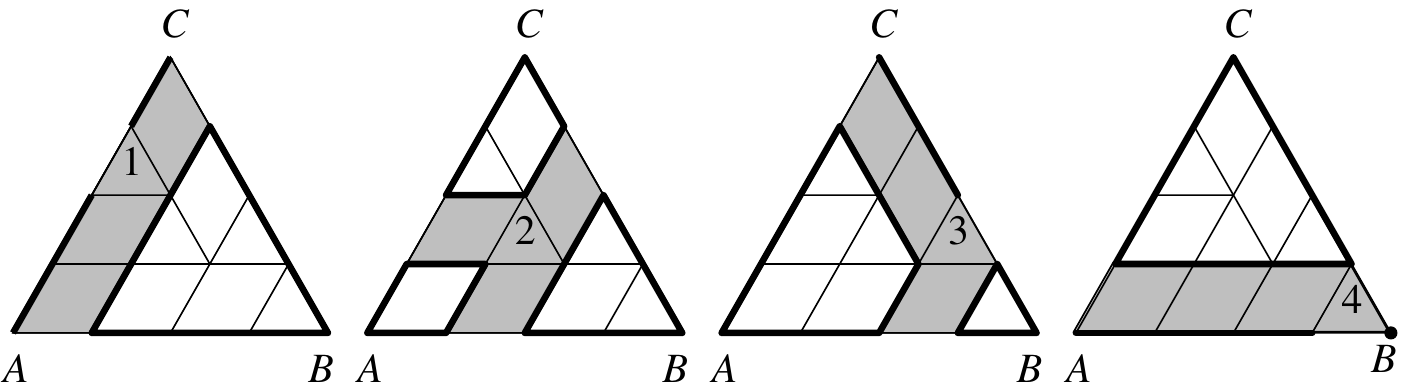}
	\bigskip\\	
	\caption{The $4$ zones in a fine mixed subdivision of $4\Delta_2$.}
	\label{fig:zones}
\end{figure}

From the zones of a mixed subdivision of $n\Delta_2$ we can recover the following information:
\begin{itemize}
\item \emph{The acyclic system of permutations}. Along each edge of $n\Delta_2$ we see $n$ segments, each of which belongs to a different zone. The permutation associated to that edge is precisely the sequence of zones. For example, the permutations on the edges $AB$, $AC$ and $BC$ for the subdivision of Figures~\ref{fig:example} and~\ref{fig:zones} are $1234$, $4213$ and $4321$ (here and elsewhere, we denote by the first capital letters, $A$, $B$ and $C$ in this case, the vertices of $\Delta_d-1$).

\item \emph{The Minkowski labeling of each cell}. The $i$-th (open) zone consists of the cells whose $i$-th Minkowski summand is more than a single vertex: the Minkowski summand is the whole triangle in the triangle of the zone, and it is the edge $AB$, $BC$ or $AC$ in the rhombi of the three arms, respectively; if a cell is not in the $i$-th zone then it is in one of the three complementary (closed) regions, each of which contains one of the three vertex of $n\Delta_2$. In this case, the $i$-th summand of that cell is the corresponding vertex of $\Delta_2$. 

For example, the upper most rhombus in Figures~\ref{fig:example} and~\ref{fig:zones} has the Minkowski decomposition $BC + C + AB + C$.
\end{itemize}
\end{example}

What we say in the $d=2$ case holds (with appropriate definitions) in every dimension. Each unmixed simplex is the center of a \emph{zone} that can be built by extending from the simplex in all directions. The $i$-th zone contains all the cells whose $i$-th summand is more than a single vertex, and from the zones it is very easy to recover the Minkowski sum labeling of every cell.

\subsection{The acyclic system of permutations of a triangulation of $\Delta_{n-1}\times\Delta_{d-1}$}
\label{sec:permutations}

To better understand acyclic systems of permutations let us analyze the simplest non-trivial case, that of the triangular prism $\Delta_2\times\Delta_1$. It is well-known that it has six triangulations, corresponding to the six permutations of the vertices of $\Delta_2$. Each of them is characterized by the diagonals it introduces in the three boundary squares. More precisely, out of the $2^3$ possible choices of one diagonal in each square, the six ``non-cyclic'' ones extend to triangulations of the prism and the two cyclic ones do not. If we denote  $\{1,2,3\}$ the vertices of  $\Delta_2$ and $\{A,B\}$ those of $\Delta_1$, the information on what diagonals we choose can be encoded as an orientation of the complete graph $K_3$ on $\{1,2,3\}$ with the following meaning:
\begin{quote}
The edge $ij$ is oriented from $i$ to $j$ if the quadrilateral $AB \times ij$ is triangulated with the diagonal $(A,i)(B,j)$.
\end{quote}
In this way, the six valid choices of diagonals correspond to the acyclic orientations of $K_3$.

Let us now move to the slightly more general case of a prism over a simplex. That is, let $P=\Delta_{n-1}\times \Delta_1$, with vertices labeled $[n]:=\{1,\dots,n\}$ and $\{A,B\}$, respectively. Let $T$ be a triangulation of $P$. For each edge $ij$ of $\Delta_{n-1}$ we encode as before which diagonal of the square $AB\times ij$ is used in $T$. In this way, $T$ induces an orientation of the $1$-skeleton of $\Delta_{n-1}$ (the complete graph on $[n]$). By what we said before, this orientation does not contain cycles of length $3$ (which would correspond to triangular prisms with their boundary triangulated in an incompatible way). Now, an orientation of th complete graph without $3$-cycles must necessarily be acyclic. So, we can regard it as an ordering (permutation) of the $n$ vertices of $\Delta_{n-1}$. It is well-known that:

\begin{lemma}[\protect{\cite[Proposition 6.2.3]{dLRaSa10}}]
$\Delta_{n-1}\times \Delta_1$ has exactly $n!$ different triangulations. They are in bijection, via the above rule, to the $n!$ permutations of the vertices of $\Delta_{n-1}$.
\end{lemma}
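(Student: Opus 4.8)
The plan is to prove that the rule ``triangulation $\mapsto$ permutation'' is a bijection by showing it is well-defined, surjective, and injective. Well-definedness is already handled in the discussion above: the orientation of $K_n$ that a triangulation $T$ induces has no directed $3$-cycle — such a cycle would be a triangular prism whose three boundary squares are triangulated incompatibly — and an orientation of the complete graph with no directed triangle is necessarily acyclic, hence a total order, i.e.\ a permutation of $[n]$.

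For surjectivity I would exhibit, for each permutation $\pi=(\pi_1,\dots,\pi_n)$ of $[n]$, the \emph{staircase triangulation} $T_\pi$ of $P:=\Delta_{n-1}\times\Delta_1$ whose $n$ maximal cells are
\[
\sigma^\pi_m:=\conv\bigl(\{(\pi_1,A),\dots,(\pi_m,A)\}\cup\{(\pi_m,B),\dots,(\pi_n,B)\}\bigr),
\qquad m=1,\dots,n,
\]
where $A,B$ are the vertices of $\Delta_1$. That these $n$ simplices are full-dimensional, have pairwise disjoint interiors, cover $P$, and meet face to face is the classical ``standard triangulation of a prism'' (see, e.g.,~\cite[Section 6.2]{dLRaSa10}); it can also be checked directly, since, identifying $\Delta_{n-1}$ with $\{p\in\R^n_{\ge0}:\sum p_i=1\}$ and writing $t\in[0,1]$ for the coordinate on $\Delta_1$, one has $(p,t)\in\sigma^\pi_m$ exactly when $\sum_{k=m+1}^{n}p_{\pi_k}\le t\le\sum_{k=m}^{n}p_{\pi_k}$, and these $n$ conditions cover $[0,1]$ and overlap only on lower-dimensional sets. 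Restricting $T_\pi$ to the boundary square spanned by $AB$ and an edge $\pi_i\pi_j$ with $i<j$, one sees it uses the diagonal $(\pi_i,A)(\pi_j,B)$; hence the permutation induced by $T_\pi$ via the stated rule is exactly $\pi$.

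The core of the proof is injectivity: $T$ is determined by the orientation it induces. Relabeling the vertices of $\Delta_{n-1}$, I may assume this orientation is the identity, so the square spanned by $AB$ and $ij$ uses the diagonal $(i,A)(j,B)$ whenever $i<j$. Since the restriction of a triangulation to a face is a triangulation of that face, $(i,A)(j,B)$ is an edge of $T$ for every $i<j$, whereas the opposite diagonal $(j,A)(i,B)$ is not (a triangulated square has a unique diagonal); also $(i,A)(i,B)$ is a boundary edge, hence in $T$, for all $i$. Now a full-dimensional simplex $\sigma$ with vertices among the $2n$ vertices of $P$ is determined by the pair of index sets $S_A,S_B\subseteq[n]$ it uses on the two ``floors'', and projecting to $\Delta_{n-1}$ shows that full-dimensionality forces $S_A\cup S_B=[n]$ and $|S_A\cap S_B|=1$; write $\{m\}=S_A\cap S_B$. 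If $\sigma$ is a cell of $T$, then $(i,A)(j,B)$ is an edge of $T$ for all $i\in S_A$, $j\in S_B$ with $i\ne j$, which by the previous observation forces $i<j$; combined with $S_A\cup S_B=[n]$ and $S_A\cap S_B=\{m\}$ this forces $S_A=\{1,\dots,m\}$ and $S_B=\{m,\dots,n\}$. Hence every maximal cell of $T$ is one of the $n$ staircase simplices $\sigma^{\mathrm{id}}_m$; as these have disjoint interiors with union $P$, the triangulation $T$ must consist of all of them, i.e.\ $T=T_{\mathrm{id}}$.

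Combining the three parts, the rule is a bijection and $\Delta_{n-1}\times\Delta_1$ has exactly $n!$ triangulations. (Alternatively, the count follows from the Cayley Trick, since fine mixed subdivisions of $n\Delta_1=[0,n]$ are obviously in bijection with orderings of its $n$ unit subsegments; the remaining work would be matching that ordering with the diagonal rule, which is essentially the staircase description above.) I expect the only genuinely technical point to be the verification that the staircase simplices tile $P$ face to face — needed for surjectivity, and routine but slightly tedious — while everything else is a short forcing argument: the real content of the lemma is that, in a prism over a simplex, the diagonals chosen on the boundary squares rigidly determine a unique triangulation of the whole polytope.
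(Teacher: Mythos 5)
Your proof is correct. Note that the paper itself offers no argument for this lemma — it is quoted with a citation to \cite[Proposition 6.2.3]{dLRaSa10} — and what you wrote is essentially the standard proof found there: the staircase triangulations $T_\pi$ realize every permutation, and the forcing argument (a full-dimensional cell with vertex sets $S_A, S_B$ must have $S_A=\{1,\dots,m\}$, $S_B=\{m,\dots,n\}$ once the diagonals are fixed) shows the boundary diagonals determine the triangulation, giving injectivity. So your proposal fills in, correctly and with the same underlying mechanism, exactly the argument the paper delegates to the reference.
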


Finally, let us consider the general case of $P=\Delta_{n-1}\times \Delta_{d-1}$. Let $[n]$ be the set of vertices of $\Delta_{n-1}$ and let $S$ denote that of $\Delta_{d-1}$. Let $T$ be a triangulation of $P$. As before, for each edge $IJ$ of $\Delta_{d-1}$, $T$ induces a triangulation of $\Delta_{n-1}\times IJ$, which we encode as a permutation of $[n]$. It is important to notice that the edge $IJ$ is considered oriented and that changing its orientation reverses the permutation. Graphically, we consider the permutation of $[n]$ as written ``along the edge'' $IJ$, so that we can read it from $I$ to $J$ or from $J$ to $I$.

\begin{definition}
A \emph{system of permutations} of $[n]$ on a graph $G$ consists of one permutation of $[n]$ ``written along'' each edge of $G$. 
A  system of permutations of $[n]$ is \emph{acyclic} if, for every two symbols $i,j\in [n]$, the orientation of $G$ obtained directing every edge from $i$ to $j$ is acyclic.
\end{definition}

We are only interested in the case where $G$ is the complete graph (the $1$-skeleton of a simplex). In this case the system is acyclic if and only if it is acyclic when restricted to every triangle of the graph.

Of course, we can exchange the roles of $\Delta_{n-1}$ and  $\Delta_{d-1}$. So, every triangulation $T$ of $\Delta_{n-1}\times \Delta_{d-1}$ induces
an acyclic system of permutations of $[n]$ on $K_d=\operatorname{graph}(\Delta_{d-1})$ and an acyclic system of permutations of $S$ on $K_n=\operatorname{graph}(\Delta_{n-1})$. We call them \emph{dual} systems.
Both contain the same information about $T$, namely the way in which $T$ triangulates each square face
$ij\times IJ$. (That is, the restriction of $T$ to the $2$-skeleton).
One system can be retrieved from the other as follows: To retrieve the permutation of $S$ to be associated to the edge $ij$ from the system of permutations of $[n]$ on the edges of $K_d$, restrict the latter to the symbols $i$ and $j$. This induces an orientation of the complete graph $K_d$ (orienting every edge from $i$ to $j$) which is acyclic by assumption. Hence, it in turn induces a permutation of the vertices of $K_d$.
Figure~\ref{fig:permutations} shows two dual acyclic systems of permutations for $n=3$ and $d=4$.

\begin{figure}[ht]
	\centering
	\includegraphics[scale=0.7]
	{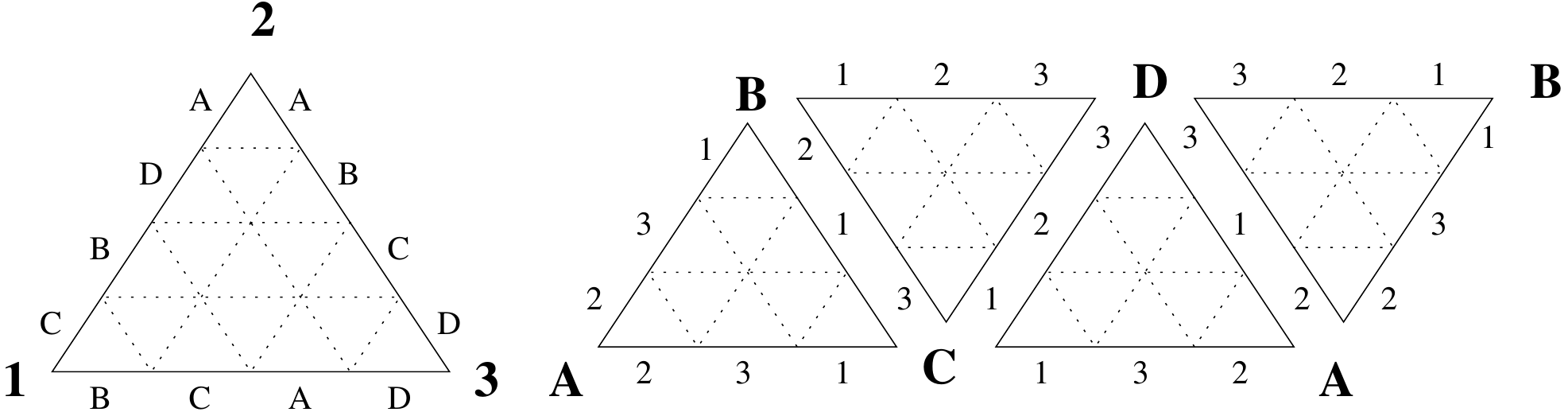}
	\bigskip\\	
	\caption{Two (dual) acyclic systems of permutations for the product $\Delta_2\times\Delta_3$. On the left, three permutations of $ABCD$ written along the edges of $K_{[3]}$. On the right, six permutations of $\{1,2,3\}$ along the edges of $K_{\{A,B,C,D\}}$.}
	\label{fig:permutations}
\end{figure}

Ardila and Ceballos~\cite{ArCe11} conjecture that every acyclic system of permutations for $\Delta_{n-1}\times\Delta_{l-1}$ can be extended to a triangulation of the polytope. This conjecture is equivalent to any of the following:
\begin{itemize}
\item Every triangulation of the $3$-skeleton of $\Delta_{n-1}\times\Delta_{d-1}$ can be extended to a triangulation of the polytope.
\item Every triangulation of $\operatorname{graph}(\Delta_{n-1})\times\Delta_{d-1} \cup\Delta_{n-1}\times  \operatorname{graph}(\Delta_{d-1})$ can be extended to a triangulation of the polytope.
\end{itemize}

The conjecture is trivial when $\min\{n-1,d-1\}=1$ and~\cite{ArCe11} contains a proof  for the case $\min\{n-1,d-1\}=2$.
In Section~\ref{sec:acyclic} we show that the conjecture fails for $\Delta_{4}\times\Delta_{3}$.

\subsection{The spread-out simplices  conjecture}
\label{sec:spread}

Ardila and Billey are interested in the \emph{positions} of the unmixed simplices in a fine mixed subdivision of $n\Delta_{d-1}$. To be more specific, let us take as standard simplex the convex hull of the standard basis. That is,
\[
\Delta_{d-1}:=\{(x_1,\dots,x_d)\in \R^d: \sum x_i=1,\quad x_i\ge 0\ \forall i\}.
\]
Then, each unmixed simplex can be written as $v+\Delta_{d-1}$ for a non-negative integer vector $v$ with sum of entries equal to $n-1$. Indeed, the (labeled) unmixed simplex is the Minkowski sum of $\Delta_{d-1}$ and $n-1$ (perhaps repeated) vertices of $\Delta_{d-1}$. The vector $v$ is the sum of those vertices. Ardila and Billey made the observation that the positions of unmixed simplices in a fine mixed subdivision are always \emph{spread-out} in the following sense:
%
%
%

\begin{definition}
Let $U:=\{v_1,\dots,v_n\}\subset\Z_{\ge 0}^d\cap \{\sum x_i=n-1\}$ be a set of $n$ integer non-negative vectors all with sum of coordinates equal to $n-1$. We say that $U$ is \emph{spread-out} if for any subset of $k$ of them we have
\[
\sum_{i=1}^d \min_j\{ (v_j)_i\} \le n-k.
\]
\end{definition}

Put differently, the unmixed simplices in a fine mixed subdivision are spread-out if no $k$ of them are contained in a subsimplex of size smaller than $k$.

\begin{theorem}[\protect{\cite[Proposition 8.2]{ArBi07}}]
The unmixed simplices in a fine mixed subdivision are spread-out.
\end{theorem}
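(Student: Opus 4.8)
The plan is to restate the inequality as a claim about nested dilated simplices, reduce it to the fact recorded above that a fine mixed subdivision of $s\Delta_{d-1}$ has exactly $s$ unmixed simplices, and isolate one restriction/completion lemma as the technical heart.

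First I would reformulate. Let $\mathcal T$ be a fine mixed subdivision of $n\Delta_{d-1}$ with unmixed simplices $U_j=v_j+\Delta_{d-1}$, $j\in[n]$. Fix $J\subseteq[n]$ with $|J|=k$, put $m:=\bigl(\min_{j\in J}(v_j)_i\bigr)_{i=1}^d\in\Z_{\ge0}^d$ and $s:=n-\sum_i m_i$. A one-line check shows that, for $m'\in\Z_{\ge0}^d$, one has $U_j\subseteq m'+s'\Delta_{d-1}$ for some $s'$ if and only if $v_j\ge m'$ coordinatewise, in which case $s'=n-\sum_i m'_i$ is forced. Hence $m+s\Delta_{d-1}$ is the \emph{smallest} such subsimplex containing all of the $U_j$, $j\in J$, and the asserted inequality $\sum_i\min_{j\in J}(v_j)_i\le n-k$ is literally the inequality $s\ge k$. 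So it is enough to prove: \emph{if $k$ unmixed simplices of a fine mixed subdivision of $n\Delta_{d-1}$ all lie inside one subsimplex $\Delta':=m+s\Delta_{d-1}$ ($m\in\Z_{\ge0}^d$), then $k\le s$.}

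Here is how I would finish this, modulo one lemma. The lemma I need is: \emph{the set of cells of $\mathcal T$ that are contained in $\Delta'$ extends to a fine mixed subdivision $\mathcal S$ of $\Delta'\cong s\Delta_{d-1}$.} (These cells form a collection of fine Minkowski cells meeting face to face, and among them are the $k$ unmixed simplices we care about.) Granting the lemma, note that each $U_j$ ($j\in J$) is a lattice translate of $\Delta_{d-1}$; a short argument with edge directions together with fineness ($\sum_i\dim B_i=\dim\sum_i B_i$) shows that a Minkowski cell which is a translate of $\Delta_{d-1}$ must have one summand equal to the whole $\Delta_{d-1}$ and all others single points, i.e.\ it is an unmixed simplex. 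Thus the $U_j$, $j\in J$, are $k$ distinct unmixed simplices of $\mathcal S$; since a fine mixed subdivision of $s\Delta_{d-1}$ has exactly $s$ unmixed simplices (the $n$-version of this statement, recorded above, applied with $s$ in place of $n$), we conclude $k\le s$.

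The main obstacle is the completion lemma: the naive ``restriction of $\mathcal T$ to $\Delta'$'' is in general \emph{not} a mixed subdivision, because cells of $\mathcal T$ may straddle $\partial\Delta'$ and get cut into non-Minkowski pieces, so one must argue that the partial fine mixed subdivision given by the cells lying inside $\Delta'$ --- equivalently, a partial triangulation of $\Delta_{s-1}\times\Delta_{d-1}$ obtained this way --- can be completed to a full one. I would attack this directly (showing that the ``margin'' $\Delta'\setminus\bigcup(\text{inside cells})$ admits a compatible fine mixed subdivision) or, in the regular case, bypass it via the tropical picture: a regular fine mixed subdivision of $n\Delta_{d-1}$ is dual to an arrangement of $n$ tropical hyperplanes in $(d-1)$-dimensional tropical projective space, the unmixed simplices correspond to the $n$ apices, and ``$k$ apices contained in a subsimplex of side $s$'' becomes a metric inequality that forces $s\ge k$ at once; extending that to non-regular subdivisions would need the tropical pseudohyperplane arrangements of Ardila--Develin. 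As a third option I would try induction on $n$: deleting a vertex $i_0\in[n]\setminus J$ restricts $\mathcal T$ to a fine mixed subdivision of $(n-1)\Delta_{d-1}$ whose unmixed simplices sit at $v'_j=v_j-b^{(j)}_{i_0}$, where $b^{(j)}_{i_0}$ is the vertex of $\Delta_{d-1}$ used by $U_j$ in position $i_0$; induction gives $\sum_i\min_{j\in J}(v'_j)_i\le(n-1)-k$, and it would remain to show that at most one coordinate $i$ has the property that \emph{every} $j\in J$ minimizing $(v'_j)_i$ uses the $i$-th vertex in position $i_0$ --- a point that, unlike mere disjointness of the $U_j$, should genuinely use the acyclicity/compatibility built into $\mathcal T$.
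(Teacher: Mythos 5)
Your reduction is fine as far as it goes: the rewriting of spread-outness as ``no $k$ of the unmixed simplices lie in a subsimplex $m+s\Delta_{d-1}$ with $s<k$'' is exactly the reformulation the paper itself records informally, and the arithmetic ($U_j\subseteq m'+s'\Delta_{d-1}$ iff $v_j\ge m'$, with $s'=n-\sum_i m'_i$ forced) is correct, as is the observation that a fine Minkowski cell which is a unit simplex must be unmixed. But the argument is not a proof, because everything has been pushed into the ``completion lemma'' --- that the cells of $\mathcal T$ lying inside $\Delta'$ extend to a fine mixed subdivision of $\Delta'\cong s\Delta_{d-1}$ --- and that lemma is left entirely unproved, with only three sketched strategies, none carried out. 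Worse, this lemma is not a routine technicality: it carries the full content of the theorem. If $k>s$ unmixed simplices could sit inside $\Delta'$ (e.g.\ three unit triangles at the corners of a size-$2$ subtriangle), completion would be impossible for exactly the reason you want to exploit, so any proof of the completion lemma must already rule out such configurations; you cannot hope to get it from a soft ``fill in the margin'' argument. Indeed, extension of partial triangulations/subdivisions is precisely the kind of statement this paper is devoted to showing can fail, so it cannot be taken on faith. Your inductive variant is likewise left hanging at the decisive step (``it would remain to show that at most one coordinate \dots''), and the tropical route is only asserted, and only for regular subdivisions.

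Note also that the statement is quoted here from Ardila--Billey (their Proposition~8.2); this paper gives no proof, so there is nothing internal to compare against. The proof strategy most consonant with the present paper's framework is different from yours and avoids any extension step: one passes to the acyclic system of permutations of the triangulation of $\Delta_{n-1}\times\Delta_{d-1}$, uses the source-counting formula recalled in Section~2.3 to express each coordinate of each position vector as the number of pairs $\{i,j\}$ whose associated acyclic orientation of $K_d$ has a prescribed source, and then verifies the spread-out inequalities by a purely combinatorial count over these sources (this is why, as the paper stresses, even non-extendable acyclic systems yield spread-out positions). If you want to salvage your geometric approach, you must supply an actual proof of the completion lemma; as it stands, the proposal has a genuine gap at its technical heart.
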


They also made the following conjecture:

\begin{conjecture}[\protect{Spread-out simplices conjecture~\cite[Conjecture 7.1]{ArBi07}}]
\label{conj:spread}
If a set of $n$ vectors $U$ in $d$ coordinates is spread-out then there is a mixed subdivision of $n\Delta_{d-1}$ having those vectors as the positions of unmixed simplices.
\end{conjecture}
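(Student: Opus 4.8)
The plan is to attack Conjecture~\ref{conj:spread} by induction on $n$, peeling off one unmixed simplex at a time. The natural geometric operation is restriction to a facet: if $T$ is a triangulation of $\Delta_{n-1}\times\Delta_{d-1}$ and $F=\conv(\{v_k:k\ne m\})$ is a facet of $\Delta_{n-1}$, then the restriction of $T$ to $F\times\Delta_{d-1}\cong\Delta_{n-2}\times\Delta_{d-1}$ is again a triangulation, and via the Cayley Trick it corresponds to collapsing the $m$-th zone of the mixed subdivision. Under this collapse, each surviving unmixed simplex $v_j+\Delta_{d-1}$ ($j\ne m$) maps to $v_j'+\Delta_{d-1}\subset(n-1)\Delta_{d-1}$ with $v_j'=v_j-e_{\sigma(j)}$, where $\sigma(j)\in[d]$ records the vertex of $\Delta_{d-1}$ occupied by fiber $m$ in the $j$-th unmixed cell of $T$; note $v_j'$ has coordinate sum $n-2$, as it should. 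So the first step is purely combinatorial: given a spread-out set $U=\{v_1,\dots,v_n\}$, choose $m\in[n]$ and a map $\sigma\colon[n]\setminus\{m\}\to[d]$ so that the contracted set $U'=\{v_j-e_{\sigma(j)}:j\ne m\}$ is spread-out (as a configuration for $(n-1)\Delta_{d-1}$), and so that $\sigma$, together with the analogous data one wants for the other deletions, is consistent with a single acyclic system of permutations on $U$. The base cases $n\le 2$ are trivial, so the entire content is the inductive step.

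Granting the first step, the second step is: realize $U'$ by a triangulation $T'$ of $\Delta_{n-2}\times\Delta_{d-1}$ (inductive hypothesis, via the Cayley Trick), use $\sigma$ to prescribe how $T$ should triangulate the squares $mj\times IJ$ — equivalently, the permutations written along the edges of $K_n$ incident to $m$ — and then extend $T'$ across the new vertex $v_m$. Only the interaction of $T'$ with these prescribed square triangulations is at issue; everything else is either forced or free. What one wants here is a statement of the form ``a triangulation of $\Delta_{n-2}\times\Delta_{d-1}$ together with a compatible acyclic boundary prescription along a new vertex extends to a triangulation of $\Delta_{n-1}\times\Delta_{d-1}$'' — closely related to, but not obviously implied by, the Ardila--Ceballos conjecture, and hopefully more accessible since we already have a full triangulation on one facet rather than merely a skeleton. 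Lemma~\ref{lemma:ArCe11} lets one extend at least to the relevant $3$-skeleton; promoting that to a full triangulation is exactly where the spread-out hypothesis on $U'$ and $\sigma$ must be used, and a full proof appears to need a new idea at this point.

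The hard part, though, is the first step, and inside it the consistency requirement. For a single $m$, finding $\sigma$ that preserves spread-out-ness is a Hall/flow-type problem: the inequalities $\sum_i\min_{j\in S}(v_j)_i\le n-k$ can be tight, so one must choose the decrements $e_{\sigma(j)}$ so that every $k$-subset loses at least one further unit of overlap. This is precisely where the arithmetic of the spread-out condition is supposed to be spent, and it is plausible but not automatic. Worse, one does not get to choose $\sigma$ independently of the other peelings: the counterexamples constructed later in this paper show that locally compatible but globally incoherent data need not extend to any triangulation, so in effect one is forced to construct the whole acyclic system of permutations of $U$ up front — presumably exploiting the flag-matroid structure behind the spread-out condition recalled in Section~\ref{sec:spread} — and only afterwards realize it by a triangulation. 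Deciding whether every acyclic system arising in this way extends (a restricted, spread-out-flavoured version of the Ardila--Ceballos conjecture) is, I expect, the true obstacle; the positional bookkeeping and the extension arguments of the second step, while technical, are secondary once that skeleton is in place.
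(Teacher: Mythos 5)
There is a fundamental problem here: the statement you are trying to prove is an open conjecture of Ardila and Billey, and the paper does not prove it either --- it only shows that one specific candidate counter-example (the spread-out set arising from the non-extendable acyclic system of Figure~\ref{fig:boundary5}) is in fact realizable, and derives some necessary conditions that a minimal counter-example would have to satisfy. Your proposal is likewise not a proof: you explicitly leave both of its load-bearing steps unresolved (``a full proof appears to need a new idea at this point'', ``the true obstacle''), so what you have written is a programme, not an argument. Concretely, two gaps remain. First, the inductive peeling step --- choosing $m$ and $\sigma$ so that $U'=\{v_j-e_{\sigma(j)}\}$ is again spread-out, coherently across all deletions --- is exactly where the arithmetic of the spread-out condition must be spent, and you give no construction; the paper only has this in two special situations (Lemma~\ref{lemma:simplices}, where some coordinate is positive in all vectors but one and the decrement direction is forced, and Lemma~\ref{lemma:i-null} for $i$-null systems), which is far from a general induction. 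Second, and more seriously, the route you fall back on --- build a globally coherent acyclic system of permutations for $U$ first, then extend it to a triangulation --- is precisely the Ardila--Ceballos strategy that this paper's main theorem defeats: the acyclic system of Figure~\ref{fig:boundary5} is acyclic, globally coherent, and yields a spread-out position set, yet it extends to no triangulation of $\Delta_4\times\Delta_2$. So acyclicity plus coherence of the permutation data cannot be the engine of the extension step; any proof along these lines would need to prove a restricted, ``spread-out-flavoured'' extension statement that is currently unknown and that you correctly identify but do not supply.

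A smaller point of comparison: your contraction $v_j\mapsto v_j-e_{\sigma(j)}$ under deletion of the $m$-th zone is the right geometric picture (it matches the paper's pulling/coning construction used to prove Lemma~\ref{lemma:simplices}), but in the paper this only yields an if-and-only-if realizability reduction under a strong hypothesis on the coordinate pattern, not an unconditional induction on $n$. If you want a contribution in the spirit of the paper, the realistic target is to enlarge the class of spread-out systems for which such reductions apply (as in Corollary~\ref{coro:spread} and the corollary on minimal counter-examples), not to claim the full conjecture.
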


Ardila and Ceballos~\cite{ArCe11} show that the positions of the unmixed simplices can be derived
from the  acyclic system of permutations associated to a fine mixed subdivision $T$ of $n\Delta_{d-1}$ in the following fashion: the coordinate corresponding to vertex $v$ of $\Delta_{d-1}$ in the position vector $v_i$ of the $i$-th unmixed simplex is 
\[
\#\{j\in [n]\setminus \{i\} : \text{the source  of the acyclic graph corresponding to symbols $i$ and  $j$ is $v$}\}.
\]

Moreover, they show that any acyclic system of permutations (even the non-extendable one in Section~\ref{sec:acyclic}!) gives rise via that formula to a spread-out set of simplex positions.

\begin{example}[Example~\ref{ex:4D2} continued]
Let us see this rule in action in the mixed subdivision of Figure~\ref{fig:example}. To compute the position $v_1$ of the first triangle from the acyclic system of permutations, observe that the (dual) permutations of $ABC$ induced by $12$, $13$ and $14$ are, respectively, $CAB$, $ACB$, and $CAB$. The sources are two times $C$ and one time $A$, so the vector (written in the coordinates ordered as $ABC$) is $v_1=(1,0,2)$. Similarly, we compute
$v_2=(1,1,1)$,
$v_3=(0,2,1)$, and
$v_4=(0,3,0)$.
The latter, for example, corresponds to the fact that the triangle labeled $4$ is incident to vertex $B$, so that $4$ is the source in the three permutations induced by $41$, $42$ and $43$.
\end{example}

For $d=3$, the papers~\cite{ArCe11} and~\cite{ArBi07} show that the process is reversible: every spread-out system of positions of $n$ triangles in $n\Delta_2$ extends to a fine mixed subdivision and, in particular, to a system of permutations:

\begin{theorem}[\protect{Spread-out simplices conjecture~\cite[Theorem 6.2]{ArBi07},~\cite[Theorem 4.2]{ArCe11}}]
\label{thm:d=3}
If a set of $n$ vectors $U$ in $3$ coordinates is spread-out then there is a mixed subdivision of $n\Delta_{2}$ having those vectors as the positions of unmixed simplices.
\end{theorem}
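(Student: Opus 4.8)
The plan is to turn the statement into a bipartite perfect-matching problem and then deduce Hall's condition from the spread-out hypothesis.

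\emph{Step 1: reduction to lozenge tilings and matchings.} By the Cayley Trick and the description of fine mixed subdivisions of $n\Delta_2$ as lozenge tilings (Section~\ref{sec:prelim}), realizing $U=\{v_1,\dots,v_n\}$ amounts to finding a decomposition of the triangular region $n\Delta_2$ into unit up-triangles and unit rhombi whose $n$ ``solo'' up-triangles sit at the prescribed positions $v_1,\dots,v_n$. Refine $n\Delta_2$ into its $\binom{n+1}{2}$ unit up-triangles and $\binom n2$ unit down-triangles, and let $G$ be the bipartite graph with vertex classes $D=\{\text{down-triangles}\}$ and $V=\{\text{up-triangles not located at any }v_i\}$, with an edge whenever two of these triangles share a unit segment. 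A lozenge tiling pairs each down-triangle with a distinct adjacent up-triangle, and conversely any matching of $G$ saturating $D$ yields a lozenge tiling (the matched pairs are the rhombi, the unmatched up-triangles the solo ones). Hence $U$ is realizable iff $G$ has a matching saturating $D$; since $|V|=\binom{n+1}{2}-n=\binom n2=|D|$, this is a perfect matching of $G$.

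\emph{Step 2: Hall's condition and a restatement of ``spread-out''.} By Hall's (or K\"onig's) theorem, $G$ has a perfect matching iff $|N_G(S)|\ge |S|$ for every $S\subseteq D$, where $N_G(S)$ is the set of up-triangles adjacent to $S$ and not among the $v_i$; equivalently $|N(S)|\ge |S|+\#\{i:v_i\in N(S)\}$, with $N(S)$ the set of \emph{all} adjacent up-triangles. At the same time, unwinding the definition shows that ``$U$ is spread-out'' says exactly that every translate of a subsimplex $m\Delta_2$ contained in $n\Delta_2$ holds at most $m$ of the points $v_i$.

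\emph{Step 3: verifying Hall's inequality -- the technical heart.} The set function $S\mapsto |N(S)|$ is submodular, so a standard closure argument restricts attention to the sets $S=S(W):=\{d\in D:N(d)\subseteq W\}$ of down-triangles ``surrounded'' by a set $W$ of up-triangles with $W\supseteq\{v_1,\dots,v_n\}$, and one must check $|W|-|S(W)|\ge n$. Since $S(\cdot)$ is supermodular, $|W|-|S(W)|$ is submodular and its minimizers form a lattice; by an exchange argument one should be able to take a minimizing $W$ to be a union of ``full'' triangular blocks, i.e. the sets of up-triangles inside a family of subsimplices $\sigma_j$ of side $m_j$. For such a $W$ the surrounded down-triangles are essentially the interior down-triangles of the $\sigma_j$ (with corrections for blocks abutting $\partial(n\Delta_2)$), so $|W|-|S(W)|$ splits into per-block contributions of the shape $\binom{m_j+1}{2}-\binom{m_j-2}{2}$; distributing the $v_i$ among the blocks and applying the reformulated bound $\#\{i:v_i\in\sigma_j\}\le m_j$ block by block yields $|W|-|S(W)|\ge n$. (The extreme cases $W=n\Delta_2$ and $W=\{v_1,\dots,v_n\}$ are exactly tight, a useful sanity check.) Thus $G$ has a perfect matching and $U$ is realized.

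\emph{Main obstacle.} The delicate point is Step 3: proving that an extremal $W$ may be taken to be a union of triangular blocks rather than some irregular staircase region, and correctly accounting for blocks sharing sides with $\partial(n\Delta_2)$, where additional down-triangles become surrounded. An alternative that sidesteps the matching formalism is a direct induction on $n$: peel off the zone of one carefully chosen unmixed simplex -- equivalently, delete one Minkowski summand, i.e. one vertex of $\Delta_{n-1}$ -- check that the induced positions in $(n-1)\Delta_2$ are again spread-out, and recurse; but the choice of which simplex to peel, and the way the positions transform under it, carry essentially the same combinatorial difficulty. This is, in outline, the route of Ardila--Billey~\cite{ArBi07} and of Ardila--Ceballos~\cite{ArCe11}, the latter phrased through acyclic systems of permutations.
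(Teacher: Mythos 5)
This theorem is not proved in the paper at all: it is quoted from Ardila--Billey \cite[Theorem 6.2]{ArBi07} and Ardila--Ceballos \cite[Theorem 4.2]{ArCe11} (the latter derive it by completing a spread-out position set to an acyclic system of permutations and using their extendability result for $\min\{n-1,d-1\}=2$). So your proposal should be judged as a free-standing argument, and as such it is a reasonable strategy but not a proof. Steps 1 and 2 are fine: realizing $U$ is equivalent to a lozenge tiling of $n\Delta_2$ with prescribed solo up-triangles, hence to a perfect matching of the bipartite graph between the $\binom{n}{2}$ down-triangles and the $\binom{n+1}{2}-n$ unforced up-triangles, and ``spread-out'' does say exactly that every subsimplex of side $m$ contains at most $m$ of the $v_i$. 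The reduction of Hall's condition to the inequality $|W|-|S(W)|\ge n$ for sets $W$ of up-triangles containing $U$ is also correct, as is the submodularity of $W\mapsto |W|-|S(W)|$.

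The genuine gap is precisely the step you flag as the ``technical heart'': the claim that a minimizer $W$ may be taken to be a union of full triangular blocks is asserted (``one should be able to'') but not proved, and it is exactly where the combinatorial content of the theorem lives; submodularity gives you a lattice of minimizers, but no exchange argument is exhibited that converts a staircase-shaped minimizer into a union of subsimplices, nor is it clear such a reduction is literally true without allowing more general ``saturated'' regions. Two further points would need repair even granting that claim. First, your per-block deficiency $\binom{m_j+1}{2}-\binom{m_j-2}{2}=3m_j-3$ is not the right count: for a full triangular block of side $m$ the surrounded down-triangles are the $\binom{m}{2}$ down-triangles of the block, so the deficiency is $\binom{m+1}{2}-\binom{m}{2}=m$, which is what makes the block-by-block comparison with the bound $\#\{i:v_i\in\sigma_j\}\le m_j$ come out to $\ge n$. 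Second, when blocks overlap or are adjacent the deficiency is strictly subadditive (extra down-triangles straddling two blocks become surrounded), so the ``distribute the $v_i$ among the blocks'' step needs an argument that one can choose the blocks disjoint, or a more careful inclusion--exclusion; this is again nontrivial and is where the spread-out hypothesis must be used globally, not just block by block. Your alternative sketch (peel off one zone and recurse) is closer in spirit to the published proofs, but as you note the choice of which simplex to peel and the verification that the induced positions stay spread-out carry the same difficulty, so as written neither route closes the argument.
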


The interest of Ardila and Billey in spread-out sets of simplices come from the following result of them:

\begin{theorem}[\protect{\cite{ArBi07}}]
Let $n$ and $d$ be two positive integers. Let $E_{n,d}:=\Z_{\ge 0}^d\cap \{\sum x_i=n-1\}$ be the set of possible positions for unmixed simplices in $n\Delta_{d-1}$
\begin{enumerate}
\item The subsets $\{U\in E_{n,d}: |U|=n, \textrm{ and $U$ is spread-out}\}$ are the bases of a matroid ${\mathcal T}_{n,d}$ of rank $n$ on $E_{n,d}$ (\cite[Theorem 4.1]{ArBi07}).
\item ${\mathcal T}_{n,d}$ is the matroid of lines in any sufficiently generic arrangement of $n$ flags in $\C^d$ (\cite[Theorem 5.1]{ArBi07}).
\end{enumerate}
\end{theorem}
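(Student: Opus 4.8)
The plan is to treat the two parts separately. Part (1) is purely combinatorial, and I would prove it by recognising the ``spread-out'' condition as the one cut out by a monotone submodular function, so that the matroid is produced by a classical construction. Part (2) is a genericity statement, which I would prove by realising that matroid as the linear matroid of a collection of lines obtained from Schubert-type incidences in a generic flag arrangement, and matching ranks.

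For part (1), the first move is to restate the spread-out condition geometrically, as the text already suggests. For a lattice subsimplex $\sigma=w+s\Delta_{d-1}$ of $n\Delta_{d-1}$ write $\operatorname{size}(\sigma)=s$. An elementary computation shows that, for a position $v\in E_{n,d}$, one has $v+\Delta_{d-1}\subseteq w+s\Delta_{d-1}$ if and only if $v\ge w$ coordinatewise (in which case necessarily $\sum w_i=n-s$). Hence the smallest subsimplex containing a nonempty set $W$ of positions has corner $\min_{v\in W}v$ and size $n-\sum_i\min_{v\in W}v_i$, so $U\subseteq E_{n,d}$ is spread-out exactly when $|U\cap\sigma|\le\operatorname{size}(\sigma)$ for every lattice subsimplex $\sigma\subseteq n\Delta_{d-1}$. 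Now the lattice subsimplices of $n\Delta_{d-1}$, ordered by reverse inclusion and together with the empty set, form a lattice whose meet and join are the coordinatewise $\max$ and $\min$ of the corner vectors, and on which $\operatorname{size}$ is an affine, in particular modular, function. It follows that the set function
\[
f(S):=\min\{\operatorname{size}(\sigma): S\subseteq\sigma\}=n-\sum_{i}\min_{v\in S}v_i\qquad(\emptyset\ne S\subseteq E_{n,d}),
\]
together with $f(\emptyset):=0$, is monotone and submodular, with $f(\{v\})=1$ for every $v$. By the classical theorem on matroids induced by a monotone integer-valued submodular function (Edmonds), the subsets $I$ with $|I\cap S|\le f(S)$ for all $S$ — equivalently, by the reformulation above, the spread-out subsets of $E_{n,d}$ — are precisely the independent sets of a matroid ${\mathcal T}_{n,d}$, with rank function $r(S)=\min_{A\subseteq S}\bigl(|S\setminus A|+f(A)\bigr)$. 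Its rank equals $f(E_{n,d})=n$, because the only subsimplex containing all of the corner positions $(n-1)e_1,\dots,(n-1)e_d$ is $n\Delta_{d-1}$ itself, and this rank is attained (for $d\ge 2$) by the $n$ collinear positions along any edge of $n\Delta_{d-1}$, which one checks directly to be spread-out. Thus the bases of ${\mathcal T}_{n,d}$ are exactly the spread-out sets of size $n$, which proves (1).

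For part (2), fix a sufficiently generic arrangement of $n$ complete flags $V^{(1)}_\bullet,\dots,V^{(n)}_\bullet$ in $\C^d$ and, to each position $v\in E_{n,d}$, associate the subspace $L_v:=\bigcap_j V^{(j)}_{a_j(v)}$, where the indices $a_j(v)$ are dictated by $v$ through the explicit recipe recording which Schubert cell of the arrangement the corresponding line must occupy (the geometric counterpart of the Ardila--Ceballos formula for simplex positions). Genericity makes each such intersection exactly one-dimensional, so $L_v$ is a line, and more generally makes every intersection $\bigcap_{j\in J}V^{(j)}_{a_j}$ have the expected dimension. The heart of the argument is to show that $\dim\operatorname{span}\{L_v:v\in S\}=r(S)$ for every $S\subseteq E_{n,d}$. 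The inequality ``$\le$'' is combinatorial: if $\sigma$ is the minimal subsimplex containing $S$, the common lower bounds that $\sigma$ imposes on the $a_j(v)$ confine all the lines $L_v$ with $v\in S$ to a single linear subspace of dimension $\operatorname{size}(\sigma)=f(S)$, and intersecting such conditions over all subsets of $S$ yields $\dim\operatorname{span}\le r(S)$. The inequality ``$\ge$'' is the genericity input: for a dense set of flag tuples the spans are as large as the combinatorics permits, simultaneously for all $S$. I would establish this either by a Kleiman-type transversality argument — the finitely many Schubert-incidence subvarieties in play meet properly for generic flags — or by an explicit model in which all flags are built from a single generic rational normal curve, so that the required rank equalities become non-vanishing of Vandermonde-like minors. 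Matching the two inequalities identifies the linear matroid of $\{L_v\}$ with ${\mathcal T}_{n,d}$.

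The step I expect to be the main obstacle is the ``$\ge$'' half of part (2): making ``sufficiently generic'' precise so that one flag arrangement works simultaneously for all of the exponentially many subsets $S$, and, more essentially, proving that the dimension predicted by the combinatorics is genuinely attained rather than merely bounded above. By contrast, part (1) is formal once the subsimplex reformulation and the modularity of $\operatorname{size}$ are in hand, and the remainder of part (2) is the bookkeeping that translates the lattice of subsimplices into the poset of Schubert conditions.
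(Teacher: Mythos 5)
This theorem is not proved in the paper at all: it is quoted verbatim from Ardila--Billey (\cite[Theorems 4.1 and 5.1]{ArBi07}), so your sketch has to stand on its own, and its central step in part (1) does not. The function $f(S)=n-\sum_i\min_{v\in S}v_i$ (with $f(\emptyset)=0$) is \emph{not} submodular: for the two corner positions $u=(n-1)e_1$ and $w=(n-1)e_2$ of $E_{n,d}$ one has $f(\{u\})=f(\{w\})=1$ but $f(\{u,w\})=n$, so $f(\{u,w\})+f(\emptyset)>f(\{u\})+f(\{w\})$ whenever $n\ge 3$. The function is only \emph{intersecting} submodular (the inequality holds when the two sets meet), and Edmonds' induced-matroid theorem in the form you invoke it, together with the rank formula $r(S)=\min_{A\subseteq S}\bigl(|S\setminus A|+f(A)\bigr)$, requires full submodularity. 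The failure is not cosmetic: that rank formula is false for this family. In $E_{5,4}$ let $A_1$ be the four positions $\ge(3,0,0,0)$ and $A_2$ the four positions $\ge(0,3,0,0)$; any spread-out subset of $S=A_1\cup A_2$ contains at most two elements of each $A_i$, so the largest spread-out subset of $S$ has $4$ elements, while $\min_{A\subseteq S}\bigl(|S\setminus A|+f(A)\bigr)=5$. Hence, if the independent sets are to be exactly the spread-out sets (as you assert), the rank function must be the partition-type one coming from the Dilworth truncation of an intersecting-submodular function, not the one you wrote. Repairing part (1) therefore means either invoking and verifying that heavier machinery, or proving basis exchange for the $n$-element spread-out sets directly, as Ardila and Billey do; as written, the key step is a gap.

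Part (2) is a program rather than a proof. The recipe attaching a line $L_v$ to a position $v$ is left unspecified (note $v$ has $d$ coordinates while there are $n$ flags, so the indexing $a_j(v)$ is not the obvious one), the upper bound $\dim\operatorname{span}\le r(S)$ would in any case have to be run with the corrected partition-type rank function from part (1), and the essential half --- that one sufficiently generic flag arrangement attains the combinatorial bound simultaneously for all subsets $S$ --- is exactly what you defer to ``Kleiman-type transversality or an explicit model'' and yourself identify as the main obstacle. That deferred statement is precisely the content of \cite[Theorem 5.1]{ArBi07}, so part (2) cannot be counted as established either.
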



\section{The Acyclic System Conjecture is false}

\subsection{A non-extendable boundary triangulation}
\label{sec:boundary}

The acyclic system conjecture would follow from the following statement: whenever $n+d>3$, every triangulation of the boundary of $\Delta_{n-1}\times\Delta_{d-1}$ extends to the interior. Here we show this statement is false, as a step towards disproving the conjecture.

Consider the acyclic system of permutations of Figure~\ref{fig:permutations}. Since $d-1=2$, the acyclic system conjecture is true in this case. In fact, the acyclic system of permutations extends to not one but three different triangulations, displayed in Figure~\ref{fig:3triangulations} in the form of mixed subdivisions of $4\Delta_2$. Observe that the three have their triangles in the same positions, as predicted by~\cite{ArCe11}: from the acyclic system of permutations of a triangulation of $\Delta_{n-1}\times\Delta_{d-1}$ the positions of the $n$ $(d-1)$-simplices of the corresponding mixed subdivision of $n\Delta_{d-1}$ can be deduced.

\begin{figure}[ht]
	\centering
	\includegraphics[scale=0.7]
	{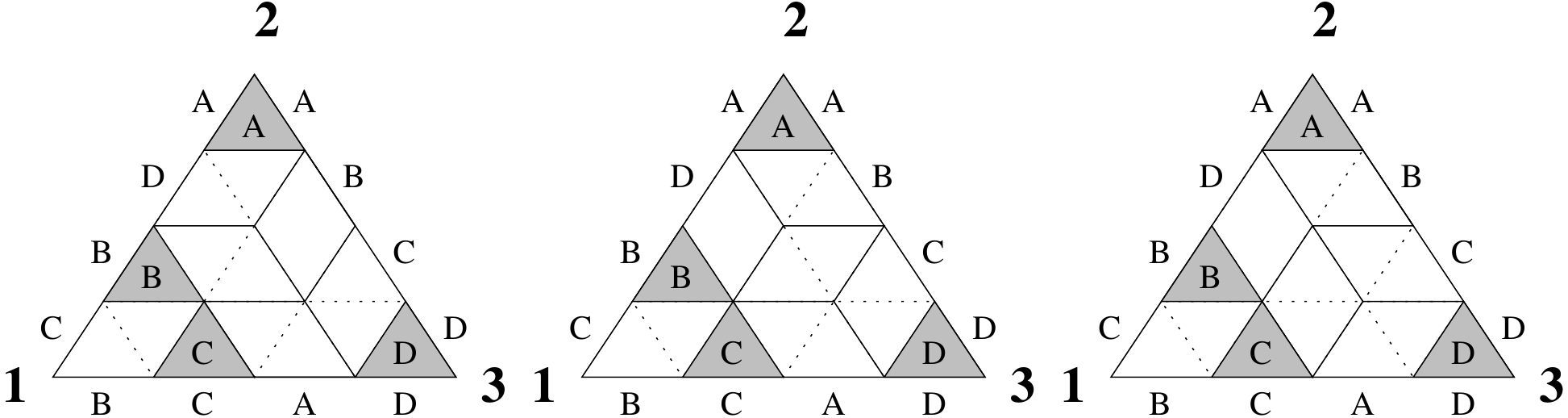}
	\bigskip\\	
	\caption{Three mixed subdivisions of $4\Delta_2$ (equivalently, three triangulations of $\Delta_3\times \Delta_2$) realizing the acyclic system of permutations of Figure~\ref{fig:permutations}.}
	\label{fig:3triangulations}
\end{figure}

But let us now consider how many triangulations of $\partial(\Delta_3\times \Delta_2)$ realize the same system of permutations. In $\Delta_3\times \Delta_2$ there are two types of facets: three copies of $\Delta_3\times \Delta_1$ and four copies of $\Delta_2\times \Delta_2$. In the former, the acyclic system of permutations already fixes the triangulations, since they are fixed by the dual system of permutations. But in the latter the only information that can be deduced from the system of permutations is what we see in Figure~\ref{fig:4triangulations}. In particular, there are four different ways of completing the acyclic system of permutations to a triangulation of $\partial(\Delta_3\times \Delta_2)$: each of the two hexagons of Figure~\ref{fig:4triangulations} can be tiled in two different ways.

\begin{figure}[ht]
	\centering
	\includegraphics[scale=0.7]
	{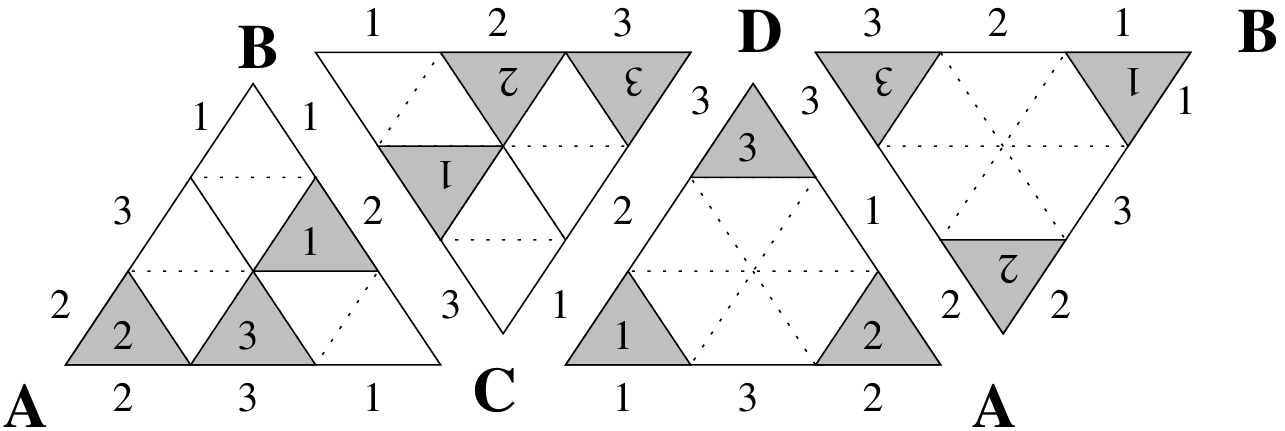}
	\bigskip\\	
	\caption{The boundary of $\Delta_3\times \Delta_2$ can be triangulated in four ways compatible with the acyclic system of permutations.
	}
	\label{fig:4triangulations}
\end{figure}

This simple counting implies that one of the four triangulations of $\partial(\Delta_3\times \Delta_2)$ cannot be extended to the interior, since only three triangulations of $\Delta_3\times \Delta_2$ realize the acyclic system of permutations. For future reference, let us explicitly show which one is not realizable, and why:

\begin{proposition}
The triangulation of the boundary of $\Delta_3\times \Delta_2$ displayed in Figure~\ref{fig:non-extendable} cannot be extended to a triangulation of $\Delta_3\times \Delta_2$.
\end{proposition}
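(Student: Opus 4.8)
The plan is to reformulate the statement via the Cayley Trick and then argue by elimination, keeping track of the cells that the boundary forces inside. Triangulations of $\Delta_3\times\Delta_2$ correspond to fine mixed subdivisions (lozenge tilings) of $4\Delta_2$, and a triangulation of $\partial(\Delta_3\times\Delta_2)$ amounts to prescribing the restriction of such a tiling to the three sides of $4\Delta_2$ (the three $\Delta_3\times\Delta_1$ facets, carrying the permutations of Figure~\ref{fig:permutations}) together with the four induced triangulations of the $\Delta_2\times\Delta_2$ facets. As explained in Section~\ref{sec:boundary}, the system of permutations of Figure~\ref{fig:permutations} already fixes the three sides, the $2$-skeleton of each $\Delta_2\times\Delta_2$ facet, and, via the Ardila--Ceballos formula, the positions of the four triangles of the would-be tiling (cf.\ Example~\ref{ex:4D2}); the only remaining freedom is the choice of one of two tilings for each of the two hexagonal regions of Figure~\ref{fig:4triangulations}.

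First I would check, for each of the three triangulations of $\Delta_3\times\Delta_2$ displayed in Figure~\ref{fig:3triangulations}, that its restriction to the boundary differs from the triangulation of Figure~\ref{fig:non-extendable}; concretely, by comparing the tilings induced on the two hexagonal $\Delta_2\times\Delta_2$ facets. Since, as recalled in Section~\ref{sec:boundary}, the three triangulations of Figure~\ref{fig:3triangulations} are the only triangulations of $\Delta_3\times\Delta_2$ compatible with the given system of permutations, it follows that the boundary triangulation of Figure~\ref{fig:non-extendable} is the restriction of no triangulation of the polytope, which is the assertion.

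To make the proof self-contained and exhibit the obstruction directly (the ``and why''), I would alternatively assume that a lozenge tiling $\mathcal T$ of $4\Delta_2$ restricts to the boundary data of Figure~\ref{fig:non-extendable} and then chase the cells forced inward from the prescribed sides and hexagons along the four zones centered at the four triangles, using the zone description of Example~\ref{ex:4D2}: each zone must join all three sides of $4\Delta_2$ at the positions dictated by the permutations while passing through the cells already fixed on the two hexagonal facets. Following these forced lozenges one reaches an incompatibility --- two lozenges of a common zone forced to overlap, equivalently an interior lattice point of $4\Delta_2$ whose star admits no valid completion --- and this contradiction is the desired ``why''.

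I expect the main obstacle to be purely bookkeeping. For the elimination argument it is the routine but error-prone task of reading off, from the three pictures in Figure~\ref{fig:3triangulations}, their restrictions to the two hexagonal facets and verifying that none of the three equals the tiling of Figure~\ref{fig:non-extendable}; for the direct argument it is disentangling which lozenges each zone is forced to contain and locating the overlap. In either case no new idea is needed beyond the Cayley-trick dictionary and the zone structure of fine mixed subdivisions reviewed in Section~\ref{sec:prelim}.
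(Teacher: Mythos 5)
Your first (elimination) argument is essentially the paper's proof: both rest on the fact that the three tilings of Figure~\ref{fig:3triangulations} are the only triangulations of $\Delta_3\times\Delta_2$ compatible with the given system of permutations, and then verify that none of them restricts to the boundary triangulation of Figure~\ref{fig:non-extendable}. The only difference is bookkeeping: rather than comparing the induced tilings of the two hexagonal $\Delta_2\times\Delta_2$ facets, the paper localizes the check to a single forced cell --- the two shaded rhombi must be facets of one simplex $\{A1,D1,B2,D2,A3,C3\}$, i.e.\ the rhombus $13+2+3+12$, and each of the three candidate tilings contains exactly one rhombus of that type, none of which is the required one.
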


\begin{figure}[ht]
	\centering
	\includegraphics[scale=0.7]
	{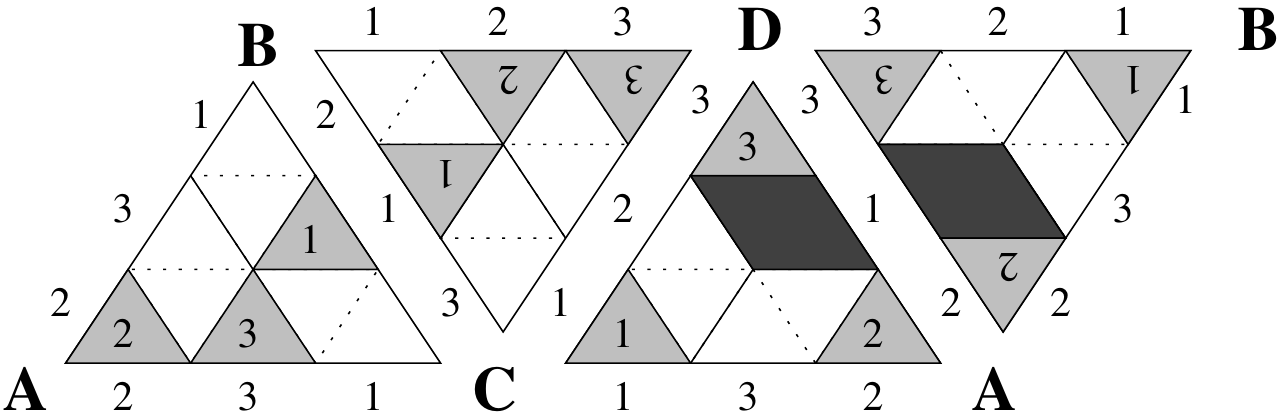}
	\bigskip\\	
	\caption{A triangulation of the boundary of $\Delta_3\times \Delta_2$.
	}
	\label{fig:non-extendable}
\end{figure}

\begin{proof}
Consider the two shaded rhombi of Figure~\ref{fig:non-extendable}. They correspond, respectively, to the simplices
$\{A1, D1, D2, A3, C3\}$ and $\{A1, D1, B2, D2, A3\}$ in $\Delta_3\times \Delta_2$. If this was extended to a triangulation $T$ of  $\Delta_3\times \Delta_2$, in $T$ these two simplices should be facets of a single simplex $\{A1, D1, B2, D2, A3, C3\}$ of $T$. In the mixed subdivision of $4\Delta_2$, this simplex would appear as the Minkowski sum $13 + 2 + 3 + 12$. That is, to
a rhombus with one side parallel to $12$ (and in the zone of $D$) and the other parallel to $13$ (and in the zone of $A$). Looking now at Figure~\ref{fig:3triangulations} we see that the three candidate triangulations have each a single rhombus with those properties.
But the simplices they represent are, respectively,
\[
\{A1, A3, B3, C3, D1, D2\}, \quad 
\{A1, A3, B3, C2, D1, D2\}, \quad \text{and} \quad
\{A1, A3, B2, C2, D1, D2\},
\]
instead of the one we need.
\end{proof}

\subsection{A non-extendable acyclic system of permutations}
\label{sec:acyclic}

We now use the previous example as a basis for a non-extendable acyclic system of permutations. The idea is to extend the system with two new symbols that force the boundary triangulation of Figure~\ref{fig:non-extendable} to arise. For this, consider the following acyclic system of permutations.

\begin{figure}[ht]
	\centering
	\includegraphics[scale=0.7]
	{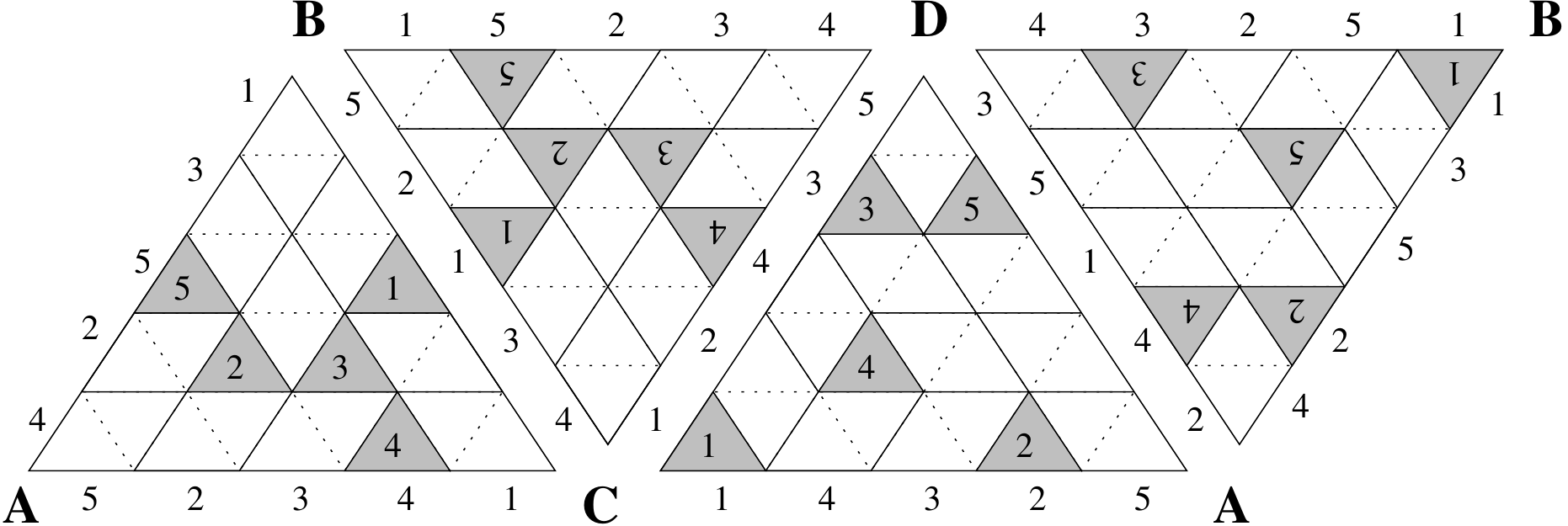}
	\bigskip\\	
	\caption{An acyclic system of permutations for the product $\Delta_4\times\Delta_2$ that cannot be extended to a triangulation.}
	\label{fig:boundary5}
\end{figure}

\begin{theorem}
The acyclic system of permutations of Figure~\ref{fig:boundary5} cannot be extended to a triangulation of $\Delta_4\times\Delta_2$.
\end{theorem}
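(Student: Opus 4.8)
The plan is to reduce the statement to the non-extendable boundary triangulation already produced in the Proposition of Section~\ref{sec:boundary}, using the fifth ``triangle'' symbol $E$ to eliminate the freedom that an acyclic system otherwise leaves on the two-dimensional square faces. Throughout I work through the Cayley Trick: writing $\Delta_4=\conv\{A,B,C,D,E\}$ and $\Delta_2=\conv\{1,2,3\}$, a triangulation of $\Delta_4\times\Delta_2$ is the same as a fine mixed subdivision of $5\Delta_2$ whose five unmixed triangles are labelled $A,B,C,D,E$, and the data of Figure~\ref{fig:boundary5} is exactly the system of permutations read along the three sides of $5\Delta_2$ (which, by the Ardila--Ceballos formula recalled in Section~\ref{sec:spread}, also fixes the five triangle positions). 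The design feature I would record by inspection is that deleting the symbol $E$ turns the system of Figure~\ref{fig:boundary5} into the system of Figure~\ref{fig:permutations}.

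Assume, for contradiction, that the system extends to a triangulation $T$ of $\Delta_4\times\Delta_2$. Restricting $T$ to the facet $F=\conv\{A,B,C,D\}\times\Delta_2\cong\Delta_3\times\Delta_2$ obtained by dropping the vertex $E$ produces a triangulation $T'=T|_F$ realizing the system of Figure~\ref{fig:permutations}. As shown in Section~\ref{sec:boundary}, that system extends to exactly the three triangulations displayed in Figure~\ref{fig:3triangulations}, so $T'$ is one of them. Consequently the induced boundary triangulation $T|_{\partial F}$ is one of the three \emph{extendable} boundary triangulations of $\partial(\Delta_3\times\Delta_2)$, and in particular $T|_{\partial F}\neq\tau$, where $\tau$ denotes the non-extendable triangulation of Figure~\ref{fig:non-extendable}.

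The crux is to derive the opposite conclusion, $T|_{\partial F}=\tau$, from the presence of $E$. The only freedom in a boundary triangulation compatible with the system sits in the two $\Delta_2\times\Delta_2$ facets $G=\conv\{A,C,D\}\times\Delta_2$ and $G'=\conv\{A,B,D\}\times\Delta_2$ of $F$ (the two hexagons of Figure~\ref{fig:4triangulations}), each admitting two tilings. I would pin these down one at a time. The facet $G$ is shared by $F$ and the neighbouring facet $F_E=\conv\{A,C,D,E\}\times\Delta_2$ of $\Delta_4\times\Delta_2$, so $T|_G=(T|_{F_E})|_G$; the claim is that the positions of $A,C,D,E$ and the permutations among $\{A,C,D,E\}$ prescribed by Figure~\ref{fig:boundary5} admit only the tiling of $G$ that contains the $4$-simplex $\{A1,D1,D2,A3,C3\}$. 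The symmetric claim for $G'$, via $F_{E'}=\conv\{A,B,D,E\}\times\Delta_2$, forces the $4$-simplex $\{A1,D1,B2,D2,A3\}$ into $T|_{G'}$. Since $\tau$ is precisely the boundary triangulation determined by these two choices, this gives $T|_{\partial F}=\tau$; but then $T'=T|_F$ would extend $\tau$ to the interior of $F$, contradicting the Proposition of Section~\ref{sec:boundary}. (The two forced $4$-simplices are exactly the shaded rhombi of Figure~\ref{fig:non-extendable}, which in any extension of $F$ would have to be glued into the mixed cell $13+2+3+12$ that none of the three tilings of Figure~\ref{fig:3triangulations} contains.)

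The main obstacle is verifying the two forcing claims, and this is where the specific permutations involving $E$ must be used: with the four symbols $\{A,B,C,D\}$ the hexagon $G$ could be tiled in two ways, so I must check that replacing $B$ by $E$ --- i.e. passing from $F$ to $F_E$ --- rigidifies it. Concretely, I would track the $E$-zone of the mixed subdivision of $5\Delta_2$ as it threads between the triangles $A,C,D$ (respectively $A,B,D$) and show that the permutations of Figure~\ref{fig:boundary5} leave it only one admissible route, forcing the claimed $4$-simplex. This is a finite but delicate zone-and-lozenge computation, to be carried out for both hexagons; everything else in the argument is formal.
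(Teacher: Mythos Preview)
Your proposal rests on a misreading of the setup that the typo in the theorem statement unfortunately invites. The product in question is $\Delta_4\times\Delta_3$, not $\Delta_4\times\Delta_2$ (compare the introduction, the abstract, and the first line of the paper's own proof). Its first factor carries the \emph{number} labels $\{1,\dots,5\}$ and the second the \emph{letter} labels $\{A,B,C,D\}$; the example extends that of Section~\ref{sec:boundary} by adjoining \emph{two} new number symbols $4$ and $5$, not one new letter $E$. Correspondingly, Figure~\ref{fig:boundary5} does not depict a single mixed subdivision of $5\Delta_2$ with permutations along its three sides; it shows the acyclic system of $[5]$ on the six edges of $K_{\{A,B,C,D\}}$, together with \emph{four} mixed subdivisions of $5\Delta_2$, one for each $\Delta_4\times\Delta_2$ facet of $\Delta_4\times\Delta_3$. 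Under your reading the statement could not even be true: Ardila and Ceballos proved their conjecture whenever one factor is $\Delta_2$, so no non-extendable acyclic system for $\Delta_4\times\Delta_2$ exists.

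The broad strategy you outline --- use the extra symbols to force the boundary of a $\Delta_3\times\Delta_2$ face to be the non-extendable configuration of Figure~\ref{fig:non-extendable}, then invoke the Proposition of Section~\ref{sec:boundary} --- is indeed the paper's strategy, but once the correct dimensions are in place the forcing mechanism is different and much cleaner than your hexagon-by-hexagon plan. The paper observes that on each of the four $\Delta_4\times\Delta_2$ facets the given acyclic system has a \emph{unique} extension to a mixed subdivision of $5\Delta_2$ (the triangle positions are determined by Ardila--Ceballos, and in this particular example the rhombi are then forced as well). Hence any triangulation $T$ of $\Delta_4\times\Delta_3$ realising the system must restrict to exactly those four subdivisions on the four facets. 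Deleting the zones of $4$ and $5$ in each of them --- equivalently, restricting $T$ to the codimension-two face $\{1,2,3\}\times\{A,B,C,D\}$ --- yields precisely the four $3\Delta_2$ pictures of Figure~\ref{fig:non-extendable}, and the Proposition finishes the argument. There is no residual ``delicate zone-and-lozenge computation'': the two extra symbols are chosen exactly so that uniqueness on the four $5\Delta_2$ facets does all the forcing at once.
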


\begin{proof}
In the figure we see not only the permutations but also an extension of them to four mixed subdivisions of $5\Delta_2$.
This (partial) extendability implies that the system is indeed acyclic. One important thing to notice is that in this particular example the extensions to $5\Delta_2$ are unique. To see this, remember that the positions of the five unit triangles in each mixed subdivision are unique by the general result of Ardila and Ceballos, and observe that in this examplee, once the triangles are positioned, the configuration of rhombi compatible with the permutations is also unique.

Suppose then that the acyclic system of permutations corresponds to a triangulation $T$ of $\Delta_4\times\Delta_3$. Then, $T$ restricts to the four facets of type $\Delta_4\times\Delta_2$ as shown in the figure. We now consider the \emph{deletion} of the symbols $4$ and $5$ in $T$. In the triangulation, this means that we restrict $T$ to the face $\Delta_3\times\Delta_2$ labeled by $\{1,2,3\}\times\{A,B,C,D\}$. In the mixed subdivisions, deletion corresponds to removing (or, collapsing to having zero width) the {zones} of the symbols $4$ and $5$. Doing so in Figure~\ref{fig:boundary5} gives precisely the non-extendable triangulation of Figure~\ref{fig:non-extendable}.
\end{proof}


\subsection{Some sufficient conditions for realizability of a spread-out system}
\label{sec:simplices}

The non-extendable acyclic system of permutations in Section~\ref{sec:acyclic} gives rise via that Ardila and Ceballos formula to a spread-out set of simplices. This would be a good candidate for a counter-example to the spread-out system conjecture.
Our first task is to show that it \emph{is not} a counter-example. This is based in the following realizability result.

Let $T_1$ be a fine mixed subdivision of $(n-1)\Delta_{d-1}$ and let $T_2$ be a fine mixed subdivision of $n\Delta_{d-2}$. Suppose that the restriction of $T_1$ to a certain facet $F$ of $\Delta_{d-1}$ coincides with the deletion of $n$ in $T_2$. Then there is a triangulation $T$ that extends both $T_1$ and $T_2$ (the former as a triangulation of $\Delta_{n-2}\times F$) to $\Delta_{n-1}\times \Delta_{d-1}$. Moreover, the positions of unmixed simplices in $T$ are as follows:
\begin{itemize}
\item For the last element $n$, $v_n$ is the same as it was in $T_2$, with a $0$ in the coordinate of the vertex opposite to $F$.
\item For the rest of elements, $v_i$ is the same as in $T_1$, with one unit added to the coordinate of the vertex opposite to $F$.
\end{itemize}

One can prove this directly in the world of mixed subdivisions of $n\Delta_{d-1}$, but a simpler proof can be done looking at them as triangultions of $\Delta_{n-1}\times \Delta_{d-1}$.
Let $v$ be the vertex opposite to $F$ in $\Delta_{d-1}$. Then, the only two facets of $\Delta_{n-1}\times \Delta_{d-1}$ not containing the vertex $(n,v)$ are $\Delta_{n-2}\times \Delta_{d-1}$ and $\Delta_{n-1}\times F$. Since $T_1$ and $T_2$ triangulate them and agree in the intersection, we can extend to a triangulation of $\Delta_{n-1}\times \Delta_{d-1}$ by just \emph{pulling} (i.e., coning) the triangulations $T_1$ and $T_2$ to the vertex $(n,v)$. We leave it to the reader to check that the effect on the positions of unmixed simplices is as we stated.

With this in mind we have the following:

\begin{lemma}[]
\label{lemma:simplices}
Let $U$ be spread-out set of $n$ nonnegative integer vectors, all with sum of coordinates $n-1$. Suppose that there is a coordinate  $i$ that is positive in all vectors of $U\setminus \{v_n\}$ (and then zero in $v_n$, or else the system would not be spread-out).

Then, $U$ is realizable by some triangulation if and only if the set $U'$ obtained deleting $v_n$ from $U$ and subracting one unit to coordinate $i$ of every other vector is realizable.
\qed
\end{lemma}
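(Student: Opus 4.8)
Write $w$ for the vertex of $\Delta_{d-1}$ carrying coordinate $i$ and $F$ for the opposite facet of $\Delta_{d-1}$. Because $v_n$ has coordinate sum $n-1$ and $(v_n)_i=0$, its restriction $\bar v_n$ to the $d-1$ coordinates indexing $F$ is a legitimate position for an unmixed simplex of $n\Delta_{d-2}$. The plan is to prove the two implications by means of, respectively, the realizability result stated above (applied with this choice of $F$) and the operation of deleting a symbol from a triangulation; in both cases the real work is to certify that the positions of the unmixed simplices transform as they must, and this is the only place where the spread-out hypothesis enters.

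For the ``if'' direction, suppose $T_1$ is a fine mixed subdivision of $(n-1)\Delta_{d-1}$ realizing $U'$, and let $T_1|_F$ be its restriction to the facet $nF$, a fine mixed subdivision of $(n-1)\Delta_{d-2}$. I would first produce a fine mixed subdivision $T_2$ of $n\Delta_{d-2}$ whose deletion of the symbol $n$ is $T_1|_F$ and whose $n$-th unmixed simplex sits at $\bar v_n$. Granting $T_2$, the realizability result glues $T_1$ and $T_2$ into a triangulation $T$ of $\Delta_{n-1}\times\Delta_{d-1}$, and its position formula gives $v^T_j=v'_j+e_w=v_j$ for $j\neq n$ and $v^T_n=\bar v_n$ with a $0$ inserted in coordinate $i$, i.e.\ $v^T_n=v_n$; hence $T$ realizes $U$. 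To get $T_2$ one merely adds a new symbol $n$ to $T_1|_F$ with its unmixed simplex placed at $\bar v_n$; the point that needs justifying is that this placement is admissible, which I would obtain by checking that the positions of the unmixed simplices of $T_1|_F$ together with $\bar v_n$ still form a spread-out family — a bookkeeping consequence of $U$ being spread-out and of how restriction to $F$ acts on positions — and then invoking realizability of such an augmented family.

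For the ``only if'' direction, suppose $T$ is a fine mixed subdivision of $n\Delta_{d-1}$ realizing $U$, and let $T'$ be the deletion of the symbol $n$ from $T$, a fine mixed subdivision of $(n-1)\Delta_{d-1}$. Discarding the $n$-th Minkowski summand of every cell carries the $j$-th unmixed simplex of $T$ to the $j$-th unmixed simplex of $T'$; that $n$-th summand is a single vertex $c_j$ of $\Delta_{d-1}$ — precisely the vertex occurring in the Ardila--Ceballos formula for $v_j$ as the source of the acyclic orientation of $\graph(\Delta_{d-1})$ attached to the pair of symbols $\{j,n\}$ — so $v^{T'}_j=v_j-e_{c_j}$. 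Therefore $T'$ realizes exactly $U'$ as soon as $c_j=w$ for every $j\neq n$, and establishing this identity is the heart of the argument.

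The claim $c_j=w$ is the step I expect to be the main obstacle. The hypothesis $(v_n)_i=0$ says the $n$-th unmixed simplex of $T$ uses $w$ on no fiber other than its own — equivalently it survives the restriction of $T$ to the facet $nF$, where it reappears as the $n$-th unmixed simplex at position $\bar v_n$ — while the hypothesis $(v_j)_i\geq 1$ says each of the other unmixed simplices uses $w$ on at least one fiber. What must be shown is that for $j\neq n$ this ``at least one fiber'' is forced to be the fiber over the last vertex of $\Delta_{n-1}$; I would prove it by feeding a suitably chosen subfamily of $U$ into the spread-out inequalities and deriving a contradiction from the assumption $c_j\neq w$ (clustering too many unmixed simplices near $w$). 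The admissibility check needed for the cap-off subdivision $T_2$ in the ``if'' direction is the mirror image of the same estimate, so in the end a single spread-out computation should take care of both directions.
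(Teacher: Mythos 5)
Your overall architecture is the paper's: the ``if'' half via the pulling/gluing of a realization $T_1$ of $U'$ in $(n-1)\Delta_{d-1}$ with a subdivision $T_2$ of $n\Delta_{d-2}$, coned to the vertex $(n,v)$, and the ``only if'' half via deletion of the symbol $n$; your position bookkeeping in both halves is correct. But the two steps you defer are not handled correctly. The identity $c_j=w$ in the ``only if'' half is not a clustering phenomenon, and feeding subfamilies of $U$ into the spread-out inequalities is the wrong tool (those inequalities constrain $U$, while the claim concerns the internal structure of an arbitrary realization $T$); in fact it follows in two lines from the hypothesis $(v_j)_i\ge 1$ alone. By the Ardila--Ceballos description, $(v_j)_i$ counts the symbols $k$ for which $i$ is the source of the orientation attached to the pair $(j,k)$, and $i$ is that source precisely when $j$ precedes $k$ along every edge of $\graph(\Delta_{d-1})$ incident to $i$, read out of $i$. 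Hence $(v_j)_i\ge 1$ forces $j$ not to be the last symbol on any edge at $i$; since this holds for every $j\ne n$, the last symbol on every edge at $i$ is $n$, so for every $j\ne n$ the source for the pair $(j,n)$ is $i$, i.e.\ $c_j=w$. Spread-out-ness is not needed here; it only guarantees $(v_n)_i=0$, as the statement already notes.

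The genuine gap is in your ``if'' half, in the production of $T_2$. You propose to place the new summand at $\bar v_n$ and to certify admissibility by checking that the positions of $T_1|_F$ together with $\bar v_n$ are spread-out, ``then invoking realizability of such an augmented family.'' That appeal is circular: realizability of spread-out position sets is precisely Conjecture~\ref{conj:spread}, which is open in general and known only for three coordinates (Theorem~\ref{thm:d=3}), i.e.\ only when $d-2=2$. Worse, even if some subdivision of $n\Delta_{d-2}$ did realize the augmented positions, that is not what the gluing requires: it needs a $T_2$ whose deletion of the symbol $n$ equals the specific labeled subdivision $T_1|_F$, and realizing a set of positions says nothing about extending that particular subdivision. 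What must actually be supplied is an insertion statement: the fine mixed subdivision $T_1|_F$ of $(n-1)\Delta_{d-2}$ can be extended by an $n$-th summand whose unmixed simplex sits at the prescribed lattice point $\bar v_n$ (equivalently, the corresponding triangulation of $\Delta_{n-2}\times F$ extends to $\Delta_{n-1}\times F$ with the cell over the new vertex fiber in the prescribed position). Your proposal neither states nor proves this, so as written the ``if'' direction does not go through; this construction of a compatible $T_2$ is the substantive content that any proof along the paper's lines must provide.
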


\begin{example}[The spread-out simplices of the counter-example to the Acyclic System Conjecture]
Let us compute the spread-out simplices of the acyclic system of permutations of Figure~\ref{fig:boundary5}. To compute
 $v_5$ observe that the permutations of $\{A,B,C,D\}$ induced respectively by $12$, $13$, $14$ and $15$ are
$BCDA$, $BCAD$, $BCDA$ and $CBAD$. The sources are three times $B$ and one time $C$, so the vector (written in the coordinates ordered as $ABCD$) is $v_1=(0,3,1,0)$. Similarly, we compute
$v_2=(2,1,1,0)$,
$v_3=(0,1,1,2)$,
$v_4=(1,0,2,1)$, and
$v_5=(1,2,0,1)$,

Since
the third coordinate is zero only on $v_5$, the lemma tells us that to realize $U$ it suffices to realize
\[
U'=\{
v_1=(0,3,0,0),
v_2=(2,1,0,0),
v_3=(0,1,0,2),
v_4=(1,0,1,1)\}.
\]
Now the second coordinate vanishes only in $v_4$, so to realize $U'$ it suffices to realize
\[
U''=\{
v_1=(0,2,0,0),
v_2=(2,0,0,0),
v_3=(0,0,0,2)
\}.
\]

But this is precisely the set of positions for the unmixed simplices in the triangulation(s) of Figure~\ref{fig:non-extendable}. To be more precise, in Figure~\ref{fig:non-extendable} we do not see the full triangulation, but we know that (three) triangulations with those positions of simplices exist from Figure~\ref{fig:3triangulations}. 
Hence, triangulations realizing the spread-out set $U$ exist.
\end{example}


Lemma~\ref{lemma:simplices} has the following interesting special case: Suppose that we have a system of positions $\{v_1,\dots, v_n\}$ in which a certain coordinate, say the $i$-th one, takes all its possible values (from $0$ to $n-1$). We call such a system $i$-spread.

\begin{corollary}
\label{coro:spread}
Every $i$-spread system of positions is spread-out, and it is realizable by some triangulation.
\qed
\end{corollary}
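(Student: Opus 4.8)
The plan is to deduce Corollary~\ref{coro:spread} by repeatedly peeling off coordinates using Lemma~\ref{lemma:simplices}, via an induction on $n$. So suppose $U=\{v_1,\dots,v_n\}$ is $i$-spread, meaning that after reordering we may assume $(v_j)_i = n-j$ for $j=1,\dots,n$; in particular $(v_n)_i=0$ and $(v_j)_i\ge 1$ for all $j<n$. The first step is the easy one: check that such a system is automatically spread-out. Indeed, for any $k$-subset $\{v_{j_1},\dots,v_{j_k}\}$ with $j_1<\dots<j_k$, the minimum of the $i$-th coordinate over the subset is $n-j_k$, and since $j_k\ge k$ this already contributes at most $n-k$; all other coordinate-minima are $\ge 0$, so the total $\sum_{t}\min_s (v_{j_s})_t$ is at most $(n-j_k)+0 \le n-k$ unless some other minimum is positive — but if another minimum is positive the bound still needs care, so here I would instead argue: the $i$-th coordinates of the $v_j$ are the distinct values $0,1,\dots,n-1$, so $\sum_j (v_j)_i = \binom{n}{2}$; since all vectors have coordinate-sum $n-1$, the remaining coordinates sum (over all $j$ and all coordinates $\ne i$) to $n(n-1)-\binom n2 = \binom n2$ as well. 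A cleaner route is simply to note that $i$-spread systems are exactly those obtained by the construction preceding Lemma~\ref{lemma:simplices} at every stage, so spread-outness will come for free from the realizability argument; I would state the spread-out part as following from the stronger realizability conclusion together with the Ardila--Billey theorem (that positions of unmixed simplices are spread-out).

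The main step is realizability. Apply Lemma~\ref{lemma:simplices} with this coordinate $i$: it is positive on all of $U\setminus\{v_n\}$ and zero on $v_n$, exactly the hypothesis of the lemma. The lemma reduces realizability of $U$ to realizability of $U'$, the set obtained by deleting $v_n$ and subtracting one from the $i$-th coordinate of each remaining vector. Now $U'$ consists of $n-1$ nonnegative integer vectors each with coordinate-sum $n-2$, and its $i$-th coordinates are $(v_1)_i-1,\dots,(v_{n-1})_i-1 = n-2,n-3,\dots,0$, i.e. exactly $0,1,\dots,n-2$ in some order. Hence $U'$ is again an $i$-spread system, now of size $n-1$. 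By induction on $n$, $U'$ is realizable by some triangulation of $\Delta_{n-3}\times\Delta_{d-1}$ (for $d$ fixed), and the base case $n=1$ is trivial: a single vector, necessarily the zero vector, is realized by the unique (empty-sum) mixed subdivision of $\Delta_{d-1}$ itself. This closes the induction and gives realizability of $U$.

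I would then record that the spread-out property of $U$ follows either from the direct combinatorial check above, or — more slickly — from the just-proved realizability together with the theorem of Ardila and Billey that the positions of unmixed simplices in any fine mixed subdivision are spread-out. Either way, both assertions of Corollary~\ref{coro:spread} are established. The only point requiring a little attention is bookkeeping: one must make sure that the coordinate $i$ being ``full'' (taking all values $0,\dots,n-1$) is preserved under the deletion-and-subtraction operation of Lemma~\ref{lemma:simplices}, and in particular that at each stage it is still the same coordinate $i$ that witnesses the hypothesis of the lemma; this is immediate once one observes that subtracting $1$ from $\{1,2,\dots,n-1\}$ gives $\{0,1,\dots,n-2\}$. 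I do not expect any genuine obstacle here — the content is entirely in Lemma~\ref{lemma:simplices}, and the corollary is just its clean iterated consequence; the ``hard part'', such as it is, is only to phrase the induction so that the base case is a triviality rather than something needing the spread-out machinery.
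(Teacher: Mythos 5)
Your realizability argument is exactly the intended one: peel off the vector with vanishing $i$-th coordinate via Lemma~\ref{lemma:simplices}, observe that the reduced system is again $i$-spread of size $n-1$, and induct down to the trivial case $n=1$. That half matches the paper (modulo a harmless indexing slip: $U'$ lives in $(n-1)\Delta_{d-1}$, i.e.\ corresponds to $\Delta_{n-2}\times\Delta_{d-1}$, not $\Delta_{n-3}\times\Delta_{d-1}$).

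There is, however, a genuine gap in how you handle the first assertion, that an $i$-spread system is spread-out. You quote Lemma~\ref{lemma:simplices} as if its hypothesis were only ``coordinate $i$ positive on $U\setminus\{v_n\}$ and zero on $v_n$'', but as stated the lemma assumes $U$ is spread-out. Your preferred route --- deduce spread-outness afterwards from realizability plus the Ardila--Billey theorem --- is therefore circular: every application of the lemma in your induction already requires the current system to be spread-out. Your attempted direct check does not close this, since you only control the $i$-th coordinate (via $j_k\ge k$) and explicitly leave open the case where other coordinate-minima are positive; the $\binom{n}{2}$ computation proves nothing in that direction. The fix is one line: for a $k$-subset $S$ with $a=\min S$ and $b=\max S$ (indices ordered so that $(v_j)_i=n-j$), the $i$-th minimum equals $n-b$, while for every $t\ne i$ one has $\min_{j\in S}(v_j)_t\le (v_a)_t$, so the remaining minima sum to at most $\sum_{t\ne i}(v_a)_t=(n-1)-(n-a)=a-1$; since $b-a\ge k-1$, the total is at most $(n-b)+(a-1)\le n-k$. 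With this inserted, each intermediate system in your induction is spread-out (being $i$-spread), the applications of Lemma~\ref{lemma:simplices} are legitimate, and both assertions of the corollary follow as you describe.
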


An $i$-spread system is a spread-out system in which the sum of the $i$-coordinates of the position vectors is as large as possible. Indeed, in a spread-out system there cannot be more than $k$ simplices with their $i$th coordinate greater or equal to $n-1-k$, and there are exactly $k$ for every $k$ if and only if the system is $i$-spread. It seems natural to look at the opposite case: the case when the $i$-th coordinate vanishes in every position vector. We call this the $i$-null case. Realizability in this case is easy to decide, by induction on $n$.

\begin{lemma}
\label{lemma:i-null}
Let $P=\{v_1,\dots,v_n\}$ be a system of positions for the simplices in $n\Delta{d-1}$. Assume it is $i$-null and 
let $P'$ be the system of positions in $n\Delta{d-2}$ obtained by deleting the $i$-th coordinate in every vector. Then:
\begin{enumerate}
\item $P'$ is spread-out if and only if $P$ is spread-out.
\item $P'$ is realizable by a fine mixed subdivision if and only if $P$ is.
\end{enumerate}
\end{lemma}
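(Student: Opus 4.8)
The plan splits the two claims: part~(1) and the ``only if'' half of part~(2) are direct bookkeeping, and all the weight falls on the ``if'' half of part~(2). Throughout write $w$ for the vertex of $\Delta_{d-1}$ indexing coordinate $i$, let $F$ be the facet of $\Delta_{d-1}$ opposite $w$ (so $V(\Delta_{d-1})=V(F)\cup\{w\}$), and let $F_i:=\{x_i=0\}\cap n\Delta_{d-1}\cong n\Delta_{d-2}$ be the corresponding facet of $n\Delta_{d-1}$; restricting a mixed subdivision to $F_i$ is the geometric shadow of deleting coordinate $i$. Part~(1) is then immediate: in the defining inequality $\sum_{l=1}^{d}\min_j (v_j)_l\le n-k$ of the spread-out condition the term indexed by $i$ equals $\min_j(v_j)_i=0$ for $P$ (by $i$-nullity) and is simply absent for $P'$, so for every $k$ and every $k$-subset the inequalities for $P$ and $P'$ coincide verbatim. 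For the ``only if'' of part~(2): if a fine mixed subdivision $\mathcal T$ realizes $P$ then $\mathcal T$ is $i$-null, so each of its unmixed simplices is a translate $v_j+\Delta_{d-1}$ with $(v_j)_i=0$ whose Minkowski decomposition $\Delta_{d-1}+\sum_{k\neq j}\{c_{jk}\}$ has all vertices $c_{jk}\in V(F)$ and hence whose facet $v_j+F$ lies in $F_i$; restricting $\mathcal T$ to $F_i$ then gives a fine mixed subdivision of $n\Delta_{d-2}$ whose label-$j$ unmixed cell is $F+\sum_{k\neq j}\{c_{jk}\}$, with position $v_j$ stripped of its zero $i$-th coordinate, i.e.\ the $j$-th vector of $P'$. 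Read backwards, this same computation shows that if an $i$-null fine mixed subdivision $\mathcal S$ of $n\Delta_{d-1}$ restricts on $F_i$ to a subdivision realizing $P'$, then $\mathcal S$ realizes $P$. So for the ``if'' half it suffices to show that every fine mixed subdivision $\mathcal T'$ of $n\Delta_{d-2}=F_i$ admits an $i$-null extension $\mathcal S$ of $n\Delta_{d-1}$ with $\mathcal S|_{F_i}=\mathcal T'$.

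I would prove this last statement by induction on $n$ (the ``easy induction'' the text refers to), the case $n=1$ being trivial. Pass to the Cayley picture, writing $\mathcal T'$ as a triangulation $T'$ of $\Delta_{n-1}\times F$; delete the symbol $n$ to obtain a fine mixed subdivision of $(n-1)\Delta_{d-2}$ (namely $T'$ restricted to $\Delta_{n-2}\times F$); apply the inductive hypothesis to get an $i$-null extension $\bar T$ on $\Delta_{n-2}\times\Delta_{d-1}$ restricting to it; and glue $T'$ and $\bar T$, which occupy the two facets $\Delta_{n-1}\times F$ and $\Delta_{n-2}\times\Delta_{d-1}$ of $\Delta_{n-1}\times\Delta_{d-1}$ and agree along $\Delta_{n-2}\times F$. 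The delicate point --- and the reason one cannot simply invoke that every triangulation of a facet extends to the polytope --- is that the gluing must keep all $n$ unmixed simplices flush against $F_i$: the ``pull everything to the vertex $(n,w)$'' reassembly used in the paragraph preceding Lemma~\ref{lemma:simplices} would add one unit to the $w$-coordinate of the positions of every symbol $j<n$, which destroys $i$-nullity. A cleaner route I would pursue instead is to argue directly with mixed subdivisions: cut $n\Delta_{d-1}$ along $\{x_i=1\}$ into the slab $\{0\le x_i\le1\}$ and the homothet $\{x_i\ge1\}=e_i+(n-1)\Delta_{d-1}$, place the inductively built $i$-null extension (of the deletion of the zone of $n$) inside the homothet, and show that $\mathcal T'$ propagates across the slab in the unique way compatible with it --- the slab cells being exactly the $n$ lattice translates of $\Delta_{d-1}$ sitting on $F_i$ together with the forced ``mixed'' cells bridging to the inner copy.

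I expect the main obstacle to be precisely this construction and its verification: that the $i$-null extension is well defined, is a genuine fine mixed subdivision, restricts to $\mathcal T'$ on $F_i$, and that $i$-nullity is faithfully inherited through the induction --- in particular that the reassembly creates no unmixed simplex off $F_i$ and that the deletion of symbol $n$ feeds correctly into the inductive hypothesis. Once that is in place, combining it with the ``only if'' computation closes part~(2), while part~(1) and the ``only if'' direction are routine.
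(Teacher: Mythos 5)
Parts (1) and the restriction direction of (2) are fine and essentially identical to the paper's argument: $i$-nullity forces every unmixed cell to have a facet on $F_i$, so restricting to $F_i$ preserves the positions (your ``if''/``only if'' labels are swapped relative to the statement, but the content is clear). Your reduction of the remaining implication to the claim that every fine mixed subdivision $\mathcal T'$ of $n\Delta_{d-2}$ admits an $i$-null extension to $n\Delta_{d-1}$ restricting to $\mathcal T'$ on $F_i$ is also sound. The genuine gap is that you never prove this claim. Your first route (induction on $n$, gluing $T'$ on $\Delta_{n-1}\times F$ with $\bar T$ on $\Delta_{n-2}\times\Delta_{d-1}$) stops exactly where you yourself flag the difficulty: you observe that the pulling reassembly adds one unit to the $w$-coordinate and so destroys $i$-nullity, but you do not exhibit any gluing that avoids this, so the hard half of (2) is left as a plan.

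Worse, the ``cleaner route'' you propose instead cannot work as described. In a fine mixed subdivision, a cell that is a unit $(d-1)$-simplex must have exactly one summand equal to $\Delta_{d-1}$, and for each label $j$ there is exactly one cell with $j$-th summand $\Delta_{d-1}$. If, as you require, the slab $\{0\le x_i\le 1\}$ contains $n$ unit translates of $\Delta_{d-1}$ sitting on $F_i$, these exhaust all $n$ labels, so no cell contained in $\{x_i\ge 1\}$ can have a full-simplex summand; hence the homothet $e_i+(n-1)\Delta_{d-1}$ cannot carry a placed copy of a fine mixed subdivision of $(n-1)\Delta_{d-1}$, which would contribute $n-1$ further such cells. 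Already for $n=2$, $d=3$, with $i$ the coordinate of $C$, every $C$-null subdivision of $2\Delta_2$ consists of two unit triangles on the facet $\{x_C=0\}$ together with the rhombus $AC+BC$, which straddles $\{x_C=1\}$: the homothet is not a union of cells, so the layered structure your construction presupposes simply does not exist. For comparison, the paper treats this direction with no induction at all: embed $T'$ as the facet $\Delta_{n-1}\times F$, cone it to a single vertex $(v,w)$ off that facet, extend by placing, and assert that the result is $i$-null with the stated positions --- precisely the kind of single-apex extension you distrust. Your instinct that preserving $i$-nullity under the extension is the delicate point is legitimate (the paper dispatches it in one sentence), but your proposal replaces it with a construction that provably cannot exist, so the key implication remains unproven.
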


\begin{proof}
Part (1) is straightforward. For part (2), the ``if'' direction follows from restriction of a mixed subdivision realizing $P$ to the $i$-th facet of $n\Delta_{d-1}$. In general, the positions of the restricted mixed subdivision are not fixed by the positions of the big one, but in the $i$-null case all the unmixed simplices are incident to the $i$-th facet, so their positions are the same in both.

For the ``only if'' direction, we switch to the language of triangulations of $\Delta_{n-1}\times\Delta_{d-1}$. In this world, the unmixed simplices are the simplices incident to the faces of the form $\{v\}\times\Delta_{d-1}$. Being $i$-null means that all such simplices are
incident to the facet $\Delta_{n-1}\times F$, where $F$ is the $i$-th facet of $\Delta_{d-1}$. In particular, from any triangulation $T'$ of 
$\Delta_{n-1}\times \Delta_{d-2}$ we can construct one of $\Delta_{n-1}\times \Delta_{d-1}$ that is $i$-null as follows: Embed 
$\Delta_{n-1}\times \Delta_{d-2}$ as the facet $\Delta_{n-1}\times F$. Cone $T'$ to ant vertex of $\Delta_{n-1}\times \Delta_{d-1}$ not in that facet. Extend that to ta triangulation of $\Delta_{n-1}\times \Delta_{d-1}$. (The latter can be always done via, for example, the \emph{placing} procedure. See, e.~g.,~\cite{dLRaSa10}). In this construction the positions of the unmixed simplices of $T'$ are the restriction f those of the extended triangulation.
\end{proof}

Lemma~\ref{lemma:i-null} does not imply that every $i$-null spread-out system of positions is realizable by a fine mixed subdivision. But it does imply (together with Lemma~\ref{lemma:simplices}) the following. 

\begin{corollary}
If the spread-out simplices conjecture is false, a minimal counter-example to it must have:
\begin{itemize}
\item At least two positions incident to every facet of $n\Delta_{d-1}$ (that is, for each coordinate, at least two vectors with a zero on it).
\item At least one position not incident to every facet of $n\Delta_{d-1}$ (that is, for each coordinate, at one vector non-zero in it).
\end{itemize}
\end{corollary}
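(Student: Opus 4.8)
The plan is to derive the Corollary directly from Lemmas~\ref{lemma:simplices} and~\ref{lemma:i-null} by contraposition, noting that a \emph{minimal} counter-example to Conjecture~\ref{conj:spread} is one for which every ``smaller'' spread-out system (in a suitable sense) is realizable. First I would make precise what ``minimal'' means: among all spread-out systems $U$ that are not realizable by a fine mixed subdivision, pick one minimizing, say, the pair $(d,n)$ lexicographically (or $n+d$, or $n\cdot d$ — any well-order compatible with the two reduction operations will do). The point is that both operations at our disposal strictly decrease this quantity: the operation of Lemma~\ref{lemma:simplices} drops $n$ by one while keeping $d$ fixed, and the operation of Lemma~\ref{lemma:i-null} drops $d$ by one while keeping $n$ fixed.

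Next I would argue the two bullet points separately. For the first bullet, suppose for contradiction that $U$ is a minimal counter-example but some coordinate $i$ is zero in \emph{at most one} vector of $U$. There are two sub-cases. If the $i$-th coordinate is positive in \emph{all} $n$ vectors, then $\sum_j \min_k\{(v_k)_j\}\ge 1$ already from coordinate $i$ alone; but taking the full set of $k=n$ vectors, the spread-out inequality demands $\sum_j \min_k\{(v_k)_j\}\le n-n=0$, a contradiction, so this sub-case cannot occur for a spread-out $U$ at all. If instead the $i$-th coordinate is positive in exactly $n-1$ of the vectors and zero in exactly one, then (after relabelling so that the zero occurs at $v_n$) the hypotheses of Lemma~\ref{lemma:simplices} are met, so $U$ is realizable if and only if the reduced system $U'$ on $n-1$ vectors (same $d$) is realizable. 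Since $U'$ is spread-out (this is part of Lemma~\ref{lemma:simplices}) and strictly smaller, minimality forces $U'$ to be realizable, hence $U$ is realizable — contradicting that $U$ is a counter-example. This establishes that every coordinate is zero in at least two vectors.

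For the second bullet, suppose instead that $U$ is a minimal counter-example but some coordinate $i$ is zero in \emph{every} vector; i.e.\ $U$ is $i$-null. Then Lemma~\ref{lemma:i-null} applies: deleting the $i$-th coordinate yields a system $P'$ in $d-1$ coordinates (same $n$) which is spread-out iff $U$ is (so $P'$ is spread-out) and realizable iff $U$ is. But $P'$ is strictly smaller, so minimality makes $P'$ realizable, hence $U$ is realizable — again a contradiction. So no coordinate can be identically zero, which is exactly the second bullet: for each coordinate there is at least one vector nonzero in it.

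I do not expect a genuine obstacle here — the entire content is packaged in the two lemmas, and the Corollary is just their contrapositive combination. The only point requiring a little care is choosing a minimality order that is simultaneously decreased by both reduction operations and under which the class of spread-out systems with $\min\{n-1,d-1\}\ge 1$ is well-founded; lexicographic order on $(n,d)$ (or $(d,n)$) works, and one should briefly note that the reductions never leave the range where the conjecture is non-trivial, or else simply observe that in the degenerate base cases ($n=1$ or $d=1$) every spread-out system is trivially realizable, so the induction bottoms out harmlessly. A half-sentence should also record that in the first bullet we implicitly used that a single $0$-coordinate together with all-positive elsewhere already forces $v_n$ to be the unique zero, matching the parenthetical remark in Lemma~\ref{lemma:simplices}.
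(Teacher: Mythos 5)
Your argument is correct and is essentially the route the paper intends: the corollary is presented there as an immediate consequence of Lemmas~\ref{lemma:simplices} and~\ref{lemma:i-null} (with no separate written proof), and your minimality/contrapositive packaging, including the $k=n$ spread-out inequality ruling out a coordinate positive in all vectors, is exactly that argument made explicit. One small correction: the spread-outness of the reduced system $U'$ is not actually part of the statement of Lemma~\ref{lemma:simplices}, so instead of attributing it to the lemma you should include the one-line check that deleting $v_n$ and subtracting one unit from coordinate $i$ decreases each relevant min-sum by exactly one, turning the bound $n-k$ into $(n-1)-k$.
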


\end{document}